\newcommand{\referenza}{}
\newtheorem{prop}{Proposition}[section]
\newtheorem*{prop*}{Proposition \referenza}
\newtheorem{thm}[prop]{Theorem}
\newtheorem*{thm*}{Theorem \referenza}
\newtheorem{cor}[prop]{Corollary}
\newtheorem*{cor*}{Corollary \referenza}
\theoremstyle{definition}
\newtheorem{defi}[prop]{Definition}
\newtheorem*{defi*}{Definition \referenza}
\newtheorem{rmk}[prop]{Remark}
\newtheorem*{rmk*}{Remark \referenza}
\newtheorem{example}[prop]{Example}
\newtheorem*{example*}{Example \referenza}
\newcommand{\N}{\mathbb{N}}
\newcommand{\C}{\mathbb{C}}
\DeclareMathOperator{\im}{i}
\DeclareMathOperator{\imm}{im}
\newcommand{\del}{\partial}
\newcommand{\delbar}{\overline{\del}}
\title{On small deformations of balanced manifolds}
\author{Daniele Angella}
\address[Daniele Angella]{Centro di Ricerca Matematica ``Ennio de Giorgi''\\
Collegio Puteano, Scuola Normale Superiore\\
Piazza dei Cavalieri 3\\
56126 Pisa, Italy
}
\email{daniele.angella@sns.it}
\email{daniele.angella@gmail.com}
\author{Luis Ugarte}
\address[Luis Ugarte]{Departamento de Matem\'aticas\\
Universidad de Zaragoza\\
Edificio de Matem\'aticas\\
c/ Pedro Cerbuna 12, 50009\\
Zaragoza, Spain}
\email{ugarte@unizar.es}
\keywords{Complex manifold, balanced metric, $\partial\overline\partial$-Lemma, sGG manifold, deformation}
\thanks{During the preparation of the work, the first author has been granted by a research fellowship by Istituto Nazionale di Alta Matematica INdAM and by a Junior Visiting Position at Centro di Ricerca Matematica ``Ennio de Giorgi''; he is also supported by the Project PRIN ``Varietà reali e complesse: geometria, topologia e analisi armonica'', by the Project FIRB ``Geometria Differenziale e Teoria Geometrica delle Funzioni'', by SNS GR14 grant ``Geometry of non-K\"ahler manifolds'', and by GNSAGA of INdAM.
The second author is supported by the projects MINECO (Spain) MTM2011-28326-C02-01 and MTM2014-58616-P,
and by Gobierno de Arag\'on/Fondo Social Europeo, grupo consolidado E15-Geometr\'{\i}a.}
\subjclass[2010]{32Q99, 53C55, 32G05}
\begin{document}

\begin{abstract}
We introduce a property of compact complex manifolds under which the existence of balanced metric is stable by small deformations of the complex structure. This property, which is weaker than the $\partial\overline\partial$-Lemma, is characterized in terms of the strongly Gauduchon cone and of the first $\partial\overline\partial$-degree measuring the difference of Aeppli and Bott-Chern cohomologies with respect to the Betti number $b_1$.
\end{abstract}

\maketitle

\section*{Introduction}

\noindent In this note we are aimed at the problem of constructing special metrics on complex non-K\"ahler manifolds. In particular, we are interested in {\em balanced metrics} in the sense of M.~L. Michelsohn \cite{michelsohn}, that is, Hermitian metrics whose fundamental form is co-closed. More precisely, basing on the work by J. Fu and S.-T. Yau \cite{fu-yau}, we introduce a condition ensuring the existence of such metrics on small deformations of the complex structure.

\medskip

It is well known that the existence of balanced metrics on a compact complex manifold is not stable under small deformations of the complex structure~\cite{AB1}. More precisely, \cite[Proposition 4.1]{AB1} provides a counter-example on the Iwasawa manifold endowed with the 
holomorphically parallelizable complex structure.
In fact, in order to prove a balanced analogue of the fundamental stability result by K. Kodaira and D.~C. Spencer \cite[Theorem 15]{kodaira-spencer}, one needs a further assumption on the variation of Bott-Chern cohomology.
We then investigate cohomological conditions on the central fibre yielding existence of balanced metrics for small deformations.

As a first result in this direction, C.-C. Wu proves in \cite[Theorem 5.13]{wu} that small deformations of
compact complex manifolds satisfying the $\del\delbar$-Lemma and admitting balanced metrics still admit balanced metrics.

The assumption on the validity of $\del\delbar$-Lemma is in fact stronger than necessary. It suffices, for example, that the dimension of the $(n-1,n-1)$-th Bott-Chern cohomology group is constant along the deformation, where $2n$ denotes the real dimension of the manifold, see Proposition \ref{prop:stability-under-const-dim-bc}.
This condition, too, is sufficient but not necessary, see Example \ref{example1}.
The above condition allows to show that any small deformation of the Iwasawa manifold endowed with an Abelian complex structure admits balanced metrics, see Proposition \ref{prop:iwasawa-ab-def}, a result that is in deep contrast with the behaviour of its holomorphically parallelizable structure. 

In order to generalize Wu's result, J. Fu and S.-T. Yau introduced in \cite[Definition 5]{fu-yau} the following finer notion.
A compact complex manifold $X$ of complex dimension $n$ is said to satisfy the {\em $(n-1,n)$-th weak $\del\delbar$-Lemma} if, for each real form $\alpha$ of type $(n-1,n-1)$ on $X$ such that $\delbar\alpha$ is $\del$-exact, then there exists a $(n-2,n-1)$-form $\beta$ such that $\delbar\alpha=\im\,\del\delbar\beta$. They proved the following result.

\begin{thm}[{\cite[Theorem 6]{fu-yau}}]\label{fu-yau}
Let $X$ be a compact complex manifold of complex dimension $n$ with a balanced metric, and let $X_{t}$ be a holomorphic deformation of $X=X_0$.
If $X_{t}$ satisfies the $(n-1,n)$-th weak $\del\delbar$-Lemma for any $t\not=0$,
then there exists a balanced metric on $X_{t}$ for $t$ sufficiently close to $0$.
\end{thm}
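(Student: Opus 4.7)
The strategy is to construct, for each small $t$, a strictly positive $d$-closed real $(n-1,n-1)_t$-form on $X_t$; by Michelsohn's characterization \cite{michelsohn}, the $(n-1)$-th root of such a form yields the fundamental form of a balanced metric. The starting point is the balanced form $\omega_0^{n-1}$ on the central fibre, which I will correct to kill its non-$(n-1,n-1)_t$ components using the hypothesis on $X_t$.

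\textbf{Construction of the corrected form.} Let $\Omega_0:=\omega_0^{n-1}$, a real $d$-closed strictly positive $(n-1,n-1)$-form on $X_0$. Decompose it under the complex structure $J_t$ of $X_t$: since $\deg\Omega_0=2n-2$, only three types appear,
\begin{equation*}
\Omega_0 \;=\; \Omega^{(n,n-2)}_t + \Omega^{(n-1,n-1)}_t + \Omega^{(n-2,n)}_t,
\end{equation*}
with $\Omega^{(n-1,n-1)}_t$ real and $\Omega^{(n-2,n)}_t=\overline{\Omega^{(n,n-2)}_t}$. Since all three depend smoothly on $t$ and reduce to $\Omega_0$ at $t=0$, the middle piece $\Omega^{(n-1,n-1)}_t$ is strictly positive (as an $(n-1,n-1)$-form) for $t$ in a neighbourhood of $0$. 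Projecting $d\Omega_0=0$ onto bidegree $(n-1,n)_t$ gives
\begin{equation*}
\delbar_t \Omega^{(n-1,n-1)}_t \;=\; -\,\del_t \Omega^{(n-2,n)}_t,
\end{equation*}
so $\delbar_t\Omega^{(n-1,n-1)}_t$ is $\del_t$-exact. For $t\neq 0$, the $(n-1,n)$-th weak $\del\delbar$-Lemma on $X_t$ produces a form $\beta_t$ of bidegree $(n-2,n-1)_t$ with $\delbar_t\Omega^{(n-1,n-1)}_t=\im\,\del_t\delbar_t\beta_t$. I then set
\begin{equation*}
\tilde\Omega_t \;:=\; \Omega^{(n-1,n-1)}_t + \im\,\del_t\beta_t - \im\,\delbar_t\bar\beta_t.
\end{equation*}
A direct check shows $\tilde\Omega_t$ is real, of pure type $(n-1,n-1)_t$, and $\delbar_t$-closed (hence $d$-closed, by reality).

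\textbf{Positivity: the key analytic point.} It remains to show that $\tilde\Omega_t$ is strictly positive for $t$ near $0$, which amounts to controlling the correction $\im\,\del_t\beta_t - \im\,\delbar_t\bar\beta_t$. The existential statement of the weak $\del\delbar$-Lemma is not enough: one needs a canonical choice of $\beta_t$ for which the correction tends to $0$ in a suitable $C^k$-norm as $t\to 0$. I would obtain this via Hodge theory for a smooth family of Hermitian metrics on $X_t$: solve the elliptic equation for $\beta_t$ by inverting the appropriate $4$th-order Bott--Chern-type Laplacian on an orthogonal complement, using that the right-hand side $\delbar_t\Omega^{(n-1,n-1)}_t$ itself vanishes at $t=0$ (since $\Omega_0$ is $d$-closed) and depends smoothly on $t$. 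This is where the weak $\del\delbar$-Lemma enters crucially, ensuring that the relevant Green's operators produce a solution of the form $\im\del_t\delbar_t\beta_t$ rather than a mere $\delbar_t$-primitive; combined with smooth dependence of the coefficients on $t$, it forces the correction to be $o(1)$, so that $\tilde\Omega_t$ stays positive for small $t$.

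\textbf{Main obstacle.} The genuine difficulty is precisely this last step: extracting from the weak $\del\delbar$-Lemma (which is only an existence statement) a $\beta_t$ with a quantitative bound whose associated correction vanishes in the limit $t\to 0$. One cannot simply invoke semi-continuity of cohomological dimensions, because the weak Lemma concerns $\partial$-exact, $\delbar$-closed forms in a fixed bidegree rather than a full Bott--Chern group; consequently the argument requires identifying the correct elliptic operator whose Green's operator depends continuously on $t$ and whose inverse image of $\delbar_t\Omega^{(n-1,n-1)}_t$ realises the decomposition demanded by the weak Lemma. Once this analytic step is in place, Michelsohn's extraction of an $(n-1)$-th root from the strictly positive closed form $\tilde\Omega_t$ delivers the desired balanced metric on $X_t$ for $t$ sufficiently small.
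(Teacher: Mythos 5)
Your proposal follows essentially the same route as the argument sketched in the paper (in the proof of Theorem~\ref{conseq}): project $\omega_0^{n-1}$ onto its $(n-1,n-1)_t$-component, observe that its $\delbar_t$-image is $\del_t$-exact, use the weak $\del\delbar$-Lemma to correct it to a $d$-closed positive $(n-1,n-1)_t$-form, and extract the $(n-1)$-th root by Michelsohn's trick. The analytic point you flag --- choosing $\beta_t$ canonically via the Green's operator of the fourth-order Bott--Chern Laplacian so that the correction term is small and positivity survives --- is exactly how the paper (following Fu--Yau) resolves it.
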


Notice that, while satisfying the $\del\delbar$-Lemma is a stable property under small deformations of the complex structure,
(see \cite[Proposition 9.21]{voisin}, or \cite[Theorem 5.12]{wu}, or \cite[Corollary 2.7]{angella-tomassini-3},) satisfying the $(n-1,n)$-th weak $\del\delbar$-Lemma is not open under deformations, (see \cite[Example 3.7]{ugarte-villacampa}).
Here we propose a cohomological notion, related to the above weak $\del\delbar$-Lemma, in order to get stability under small deformations.

\medskip

Let $X$ be a compact complex manifold of complex dimension~$n$.
We recall that the Bott-Chern and the Aeppli cohomologies \cite{Aeppli, bott-chern} of $X$ are defined, respectively, by
$$ H^{\bullet,\bullet}_{BC}(X):=\frac{\ker\del\cap\ker\delbar}{\imm\del\delbar} \qquad \text{ and }
\qquad H^{\bullet,\bullet}_{A}(X):=\frac{\ker\del\delbar}{\imm\del+\imm\delbar} \;. $$
Consider the natural map
$$\iota_{BC,\,A}^{n-1,\,n} \colon H^{n-1,\,n}_{BC}(X) \to H^{n-1,\,n}_{A}(X) $$
induced by the identity.

We introduce the following notion.

\renewcommand{\referenza}{\ref{def:(n-1,n)-strong-deldelbar-lemma}}
\begin{defi*}
A compact complex manifold $X$ of complex dimension~$n$ is said to satisfy 
the {\em $(n-1,n)$-th strong $\del\delbar$-Lemma} if the natural map $\iota_{BC,\,A}^{n-1,\,n}$ is injective.
\end{defi*}

It is clear that compact complex manifolds satisfying the $\del\delbar$-Lemma also satisfy the
$(n-1,n)$-th strong $\del\delbar$-Lemma. In Proposition~\ref{relaciones},
we prove that the latter property is weaker than the $\del\delbar$-Lemma.
It is also clear that the $(n-1,n)$-th strong $\del\delbar$-Lemma implies the $(n-1,n)$-th weak $\del\delbar$-Lemma.
Proposition~\ref{relaciones} shows that the converse does not hold.

\medskip

The main result in this note is a characterization of the $(n-1,n)$-th strong $\del\delbar$-Lemma.
More precisely, in Theorem~\ref{main-equiv}, we prove that a compact complex manifold $X$ satisfies the $(n-1,n)$-th strong $\del\delbar$-Lemma if and only if
the strongly Gauduchon cone of $X$ coincides with its Gauduchon cone and the first $\partial\overline\partial$-degree $\Delta^1(X)$ vanishes.
Compact complex manifolds satisfying the first condition are called {\em sGG manifolds} and they are studied in \cite{popovici-ugarte}. On the other hand, the complex invariants $\Delta^k(X)$, for $1 \leq k \leq n$, to which we refer here as the {\em $k$-th $\partial\overline\partial$-degrees} of $X$, are introduced in \cite{angella-tomassini-3}, where it is proved that they all vanish if and only if the compact complex manifold $X$ satisfies the $\partial\overline\partial$-Lemma.

Such a characterization turns out to be open under deformations of the complex structure, Proposition \ref{openness}. Hence, we get that the $(n-1,n)$-th strong $\del\delbar$-Lemma provides a condition assuring stability of the existence of balanced metrics. This is the final aim of this note.

\renewcommand{\referenza}{\ref{conseq}}
\begin{thm*}
Let $X$ be a compact complex manifold of complex dimension $n$ with a locally conformally balanced metric, and let $X_{t}$ be a holomorphic deformation of $X=X_0$.
If $X$ satisfies the $(n-1,n)$-th strong $\del\delbar$-Lemma, then $X_{t}$ admits a balanced metric for any $t$ sufficiently close to $0$.
\end{thm*}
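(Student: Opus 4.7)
The plan is to reduce to the Fu-Yau stability theorem (Theorem~\ref{fu-yau}), which requires two inputs: a genuine balanced metric on the central fibre $X=X_0$, and the $(n-1,n)$-th weak $\del\delbar$-Lemma on the deformed fibres $X_t$.

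\emph{Step 1: Propagation of the hypothesis.} By Theorem~\ref{main-equiv} the assumption that $X$ satisfies the $(n-1,n)$-th strong $\del\delbar$-Lemma is equivalent to $X$ being sGG together with $\Delta^1(X)=0$; by Proposition~\ref{openness} this combined property is open under small deformations, so $X_t$ still satisfies the $(n-1,n)$-th strong $\del\delbar$-Lemma, hence also its weak version, for every $t$ in a neighbourhood of $0$. This takes care of the second hypothesis of Theorem~\ref{fu-yau}.

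\emph{Step 2: Upgrading the metric on $X_0$.} Writing the given locally conformally balanced metric as $\omega$, its Lee-type 1-form $\theta$ is defined by $d\omega^{n-1}=\theta\wedge\omega^{n-1}$ and satisfies $d\theta=0$. A global conformal change $\omega\rightsquigarrow e^{2f}\omega$ produces a balanced metric precisely when $\theta=-2(n-1)\,df$, so the task reduces to showing that $\theta$ is $d$-exact. Decomposing $\theta=\theta^{1,0}+\theta^{0,1}$, the sGG condition (via the Popovici-Ugarte characterization in terms of the $(0,1)$-Aeppli and $(n,n-1)$-Bott-Chern numbers) forces $\theta^{0,1}$ to be $\del$-closed, so $[\theta^{0,1}]\in H^{0,1}_{BC}(X)$ is well-defined; the vanishing $\Delta^1(X)=0$ then aligns the Bott-Chern, Dolbeault, Aeppli and de~Rham dimensions in total degree one so that the natural maps between them become isomorphisms, whence the real closed 1-form $\theta$ must be $d$-exact. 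With the resulting $f$ in hand, $e^{2f}\omega$ is balanced on $X_0$.

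\emph{Step 3: Conclusion.} With a balanced metric on $X_0$ produced in Step~2 and the $(n-1,n)$-th weak $\del\delbar$-Lemma on $X_t$ secured in Step~1, Theorem~\ref{fu-yau} applies directly and yields a balanced metric on $X_t$ for every $t$ sufficiently close to $0$. The main obstacle is the cohomological argument in Step~2 deducing $d$-exactness of the Lee form from the combined assumption sGG $+\ \Delta^1(X)=0$, which requires a careful comparison of the low-degree Bott-Chern, Dolbeault and de~Rham data; once this is settled, the remaining pieces slot together and the theorem follows.
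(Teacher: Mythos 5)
Your overall architecture is the same as the paper's: produce a genuine balanced metric on $X_0$, propagate the $(n-1,n)$-th strong $\del\delbar$-Lemma to nearby fibres by openness (your Step~1 is exactly the paper's argument, via Corollary~\ref{numerical-char}/Proposition~\ref{openness}), and then invoke Theorem~\ref{fu-yau}. The problem is Step~2, which is where all the substance of the theorem sits, and your argument there has a genuine gap. The paper handles this step by observing that the strong Lemma forces $\iota^{n-1,\,n}_{BC,\,\delbar}$ to be injective (via the commutative diagram \eqref{diagg}) and then citing \cite[Theorem 2.5]{angella-ugarte-1}, which says precisely that injectivity of $\iota^{n-1,\,n}_{BC,\,\delbar}$ upgrades locally conformally balanced to globally conformally balanced. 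You instead try to prove directly that the Lee form $\theta$ is exact, and your key inference --- that because $\Delta^1(X)=0$ and sGG make the natural maps between $H^{0,1}_{BC}$, $H^{0,1}_{\delbar}$, $H^{0,1}_{A}$ and $H^1_{dR}$ isomorphisms (in the appropriate sense), ``the real closed 1-form $\theta$ must be $d$-exact'' --- is a non sequitur. Equality of these dimensions says nothing about a particular closed $1$-form representing the zero de~Rham class: on a complex torus the full $\del\delbar$-Lemma holds, all these maps are isomorphisms, and yet $b_1=2n>0$, so closed $1$-forms are very far from being automatically exact. Any correct argument must use the fact that $\theta$ is the Lee form of the given Hermitian metric, i.e.\ it must go back to the identity $\delbar\omega^{n-1}=\theta^{0,1}\wedge\omega^{n-1}$ and exploit positivity of $\omega^{n-1}$; nothing in your sketch does this. (Your intermediate claim that sGG forces $\del\theta^{0,1}=0$ is also unsubstantiated: from $d\theta=0$ one only gets $\del\theta^{0,1}=-\delbar\theta^{1,0}$, and no reason is given for this to vanish.)

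To repair Step~2 you should either cite \cite[Theorem 2.5]{angella-ugarte-1} as the paper does --- noting that the hypothesis of that theorem, injectivity of $\iota^{n-1,\,n}_{BC,\,\delbar}$, follows from injectivity of $\iota^{n-1,\,n}_{BC,\,A}$ by the factorization $\iota^{n-1,\,n}_{BC,\,A}=\iota^{n-1,\,n}_{\delbar,\,A}\circ\iota^{n-1,\,n}_{BC,\,\delbar}$ --- or reproduce its proof, which genuinely uses the metric and not only cohomological dimension counts. Once that is in place, your Steps~1 and~3 are correct and coincide with the paper's.
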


The proof follows by noticing that, by \cite[Theorem 2.5]{angella-ugarte-1}, the $(n-1,n)$-th strong $\del\delbar$-Lemma property ensures that locally conformally balanced structures are in fact globally conformal to a balanced structure, hence yielding the existence of a balanced metric; and finally by applying Fu and Yau's result, Theorem~\ref{fu-yau}.

\bigskip

\noindent{\sl Acknowledgments.}
The first author would like to thank the Departamento de Matem\'aticas of the Universidad de Zaragoza for the warm hospitality.

\section{Preliminaries on sGG manifolds}

\noindent Let $X$ be a compact complex manifold of complex dimension $n$.
We recall that the \emph{Gauduchon cone} ${\mathcal{C}_G}(X)$ of $X$ is defined in \cite{popovici} as the open convex cone
$$
{\mathcal{C}_G}(X) \;\subset\; H^{n-1,\,n-1}_A(X)
$$
consisting of the (real) Aeppli cohomology classes $[\omega^{n-1}]_A$ which are represented by $(n-1)$-powers of Gauduchon metrics~$\omega$ on $X$ (that is, Hermitian metrics $\omega$ satisfying $\del\delbar\omega^{n-1}=0$).

A Gauduchon metric $\omega$ is called {\em strongly Gauduchon}, \cite[Definition 4.1]{popovici-inventiones},
if $\del\omega^{n-1}$ is $\delbar$-exact.
Consider the map $T$, induced by $\del$ in cohomology, given by
\begin{equation}\label{T-def}
T \colon H^{n-1,\,n-1}_{A}(X) \to H^{n,\,n-1}_{\delbar}(X) \;, \qquad T([\Omega]_{A})\;:=\;[\del\Omega]_{\delbar} \;,
\end{equation}
for any $[\Omega]_A \in H^{n-1,\,n-1}_{A}(X)$.
The \emph{strongly Gauduchon cone} (sG cone, for short) ${\mathcal{C}_{sG}}(X)$
is defined in~\cite{popovici} as
the intersection of the Gauduchon cone with the kernel of the linear map~$T$, i.e.,
$$
{\mathcal{C}_{sG}}(X) \;:=\; {\mathcal{C}_G}(X)\cap\ker T \;\subseteq\; {\mathcal{C}_G}(X) \;\subset\; H^{n-1,\,n-1}_A(X)\;.
$$
Notice that the sG property is cohomological. In fact, either all the Gauduchon metrics $\omega$ for which $\omega^{n-1}$ belongs to a given Aeppli-Gauduchon class $[\omega^{n-1}]_{A}\in {\mathcal{C}_G}(X)$ are sG, or none of them is.

\medskip

The following class is introduced and studied in \cite{popovici-ugarte}:
a compact complex manifold $X$ is said to be an \emph{sGG manifold} if the sG cone of $X$ coincides with the Gauduchon cone,
i.e., $${\mathcal{C}_{sG}}(X) \;=\; {\mathcal{C}_G}(X)\;.$$

We will need the following conditions equivalent to the sGG property.

\begin{thm}[{\cite[Observation 5.3]{popovici}, \cite[Lemma 1.2, Theorem 1.3, Theorem 1.5]{popovici-ugarte}}]\label{car-sGG}
Let $X$ be a compact complex manifold of complex dimension $n$. The following statements are equivalent:
\begin{enumerate}
\item[{\rm ({\it i})}] $X$ is an sGG manifold;
\item[{\rm ({\it ii})}] every Gauduchon metric $\omega$ on $X$ is strongly Gauduchon;
\item[{\rm ({\it iii})}] the natural map $\iota^{n,\,n-1}_{\delbar,\,A}\colon H^{n,n-1}_{\delbar}(X)\to H^{n,n-1}_{A}(X)$
induced by the identity is injective;
\item[{\rm ({\it iv})}] the map $T$ given by \eqref{T-def} vanishes identically;
\item[{\rm ({\it v})}] the following special case of the $\del\delbar$-Lemma holds: for every $d$-closed form
$\Omega$ of type $(n,n-1)$ on $X$, if $\Omega$ is $\del$-exact, then $\Omega$ is also $\delbar$-exact;
\item[{\rm ({\it vi})}] there holds $h^{0,1}_{\mathrm{BC}}(X) = h^{0,1}_{\delbar}(X)$;
\item[{\rm ({\it vii})}] there holds $b_1 = 2\,h^{0,1}_{\delbar}(X)$.
\end{enumerate}
\end{thm}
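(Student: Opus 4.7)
The strategy is to organize the seven conditions into three groups and prove equivalences within and across them by short direct arguments plus a duality step. Conditions (i), (ii), (iv), (v) concern the map $T$ and the Gauduchon cone; condition (iii) is a cohomological injectivity; conditions (vi) and (vii) are numerical.

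I would first handle the block (iv)~$\Leftrightarrow$~(v)~$\Leftrightarrow$~(ii)~$\Leftrightarrow$~(i). The equivalence (iv)~$\Leftrightarrow$~(v) is a direct unwinding of definitions: for $\alpha \in \Omega^{n-1,n-1}$, the form $\Omega := \del\alpha$ is $d$-closed iff $\del\delbar\alpha = 0$, and then $[\Omega]_{\delbar} = T([\alpha]_A)$, so $T \equiv 0$ is literally the content of (v). For (ii)~$\Rightarrow$~(iv) I would use that ${\mathcal{C}_G}(X)$ is a non-empty open cone in $H^{n-1,n-1}_A(X;\mathbb{R})$, so if it lies in $\ker T$ then $\ker T$ is the entire real space, and hence also the complex space by $\C$-linearity; the reverse implication is immediate from the definition of sG. Finally (i)~$\Leftrightarrow$~(ii) holds because the sG property depends only on the Aeppli class. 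Then (iii)~$\Leftrightarrow$~(iv) follows from exactness of
$$
H^{n-1,n-1}_A(X)\xrightarrow{\,T\,} H^{n,n-1}_{\delbar}(X)\xrightarrow{\,\iota^{n,n-1}_{\delbar,A}\,} H^{n,n-1}_A(X),
$$
which I would verify directly: given $[\eta]_A = 0$ with $\eta = \del\beta + \delbar\gamma$ and $\delbar\eta = 0$, one deduces $\del\delbar\beta = 0$, so $\beta$ represents an Aeppli class and $[\eta]_{\delbar} = T([\beta]_A)$; hence $\ker\iota^{n,n-1}_{\delbar,A} = \imm T$, and $T\equiv 0$ iff $\iota^{n,n-1}_{\delbar,A}$ is injective.

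For the remaining equivalences (iii)~$\Leftrightarrow$~(vi)~$\Leftrightarrow$~(vii) I would invoke duality. Serre duality gives $H^{p,q}_{\delbar}(X) \cong H^{n-p,n-q}_{\delbar}(X)^{*}$, and the Bott--Chern--Aeppli pairing $([\alpha]_{BC},[\beta]_A) \mapsto \int_X \alpha \wedge \beta$ gives $H^{p,q}_{BC}(X) \cong H^{n-p,n-q}_{A}(X)^{*}$. Under these isomorphisms, the transpose of $\iota^{n,n-1}_{\delbar,A}$ is identified with the natural comparison map $\iota^{0,1}_{BC,\delbar}\colon H^{0,1}_{BC}(X) \to H^{0,1}_{\delbar}(X)$, so (iii) becomes the surjectivity of $\iota^{0,1}_{BC,\delbar}$. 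Combining this with the conjugation symmetry $h^{0,1}_{BC} = h^{1,0}_{BC}$, the identification of $h^{1,0}_{\delbar}$ with the dimension of holomorphic $1$-forms, and the degree-one Fr\"olicher inequality $b_1 \leq h^{1,0}_{\delbar} + h^{0,1}_{\delbar}$, one obtains the equivalences with (vi) and (vii) by chasing Hodge-number identities. The main obstacle lies precisely here: one must carefully verify the compatibility of the Serre and Bott--Chern--Aeppli pairings with the comparison maps, and then orchestrate the Hodge-number and Betti-number identities (from conjugation, Serre duality, and the degree-one Fr\"olicher spectral sequence) in the correct order to pass without circularity from the single rank statement in (iii) to the two-sided numerical identities in (vi) and (vii); once this bookkeeping is in place the remaining equivalences are essentially formal.
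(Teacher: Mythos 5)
This theorem is quoted in the paper from \cite{popovici} and \cite{popovici-ugarte} without an internal proof, so your attempt can only be judged on its own merits. The first block of your argument is sound: the identifications (i)$\Leftrightarrow$(ii), (iv)$\Leftrightarrow$(v), the openness-of-the-Gauduchon-cone argument for (ii)$\Rightarrow$(iv), and the exactness of $H^{n-1,n-1}_A\xrightarrow{T}H^{n,n-1}_{\delbar}\xrightarrow{\iota}H^{n,n-1}_A$ giving (iii)$\Leftrightarrow$(iv) are all correct and complete. The duality step is also correct: under Serre duality and the Bott--Chern/Aeppli pairing of \cite{schweitzer}, the transpose of $\iota^{n,n-1}_{\delbar,A}$ is $\iota^{0,1}_{BC,\delbar}$, so (iii) is equivalent to surjectivity of $\iota^{0,1}_{BC,\delbar}$.

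The gap is in the passage from that surjectivity statement to (vi) and (vii), which you defer to ``chasing Hodge-number identities'' from conjugation, Serre duality and the degree-one Fr\"olicher inequality. Those ingredients do not suffice. First, to convert surjectivity of $\iota^{0,1}_{BC,\delbar}$ into the numerical equality $h^{0,1}_{BC}=h^{0,1}_{\delbar}$ (and back) you need that $\iota^{0,1}_{BC,\delbar}$ is \emph{always injective}; this is true but requires the observation that $H^{0,1}_{BC}$ is the space of $d$-closed $(0,1)$-forms and that a function with $\del\delbar f=0$ on a compact manifold is constant --- you never state this. Second, and more seriously, the Fr\"olicher inequality $b_1\le h^{1,0}_{\delbar}+h^{0,1}_{\delbar}$ cannot yield (vi)$\Leftrightarrow$(vii): it gives no lower bound on $b_1$, and $h^{1,0}_{\delbar}$ can strictly exceed $h^{1,0}_{BC}$ (on the Iwasawa manifold $h^{1,0}_{\delbar}=3$ while $h^{0,1}_{\delbar}=h^{0,1}_{BC}=2$ and $b_1=4$, so Fr\"olicher reads $4\le 5$ and is not sharp). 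The missing ingredient is the exact sequence
$$0\longrightarrow H^{1,0}_{BC}(X)\longrightarrow H^1_{dR}(X;\C)\longrightarrow H^{0,1}_{\delbar}(X)\,,\qquad [u]\longmapsto [u^{0,1}]_{\delbar}\,,$$
whose exactness again uses the constancy of pluriharmonic functions. It gives the refined inequality $b_1\le h^{1,0}_{BC}+h^{0,1}_{\delbar}=h^{0,1}_{BC}+h^{0,1}_{\delbar}\le 2h^{0,1}_{\delbar}$, from which (vii)$\Rightarrow$(vi) follows at once; conversely, surjectivity of $\iota^{0,1}_{BC,\delbar}$ forces the last map in the sequence to be surjective, giving $b_1=h^{0,1}_{BC}+h^{0,1}_{\delbar}=2h^{0,1}_{\delbar}$, i.e.\ (vi)$\Rightarrow$(vii). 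This is genuinely more than bookkeeping, and it is exactly the content of \cite[Lemma 1.2]{popovici-ugarte} that your outline omits.
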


(Here $b_k$ denotes de $k$-th Betti number of the manifold,
and $h^{p,q}_{\delbar}(X)$, $h^{p,q}_{BC}(X)$ and $h^{p,q}_{A}(X)$ denote, respectively,
the dimensions of the Dolbeault, Bott-Chern and Aeppli cohomology groups of $X$.)

\medskip

Finally, we recall the definition of the {\em $\del\delbar$-degrees} $\Delta^k(X)$ of $X$.
In \cite[Theorem A]{angella-tomassini-3}, it is proven that, for any $k \in \{1,\ldots,2n\}$,
$$ \Delta^k(X) \;:=\; \sum_{p+q=k} \left( h^{p,q}_{BC}(X) + h^{p,q}_{A}(X) \right) - 2\, b_k \;\in\; \N $$
are non-negative integers.
Furthermore, it is proven in \cite[Theorem B]{angella-tomassini-3} that $X$ satisfies the $\del\delbar$-Lemma
if and only if $\Delta^k(X)=0$ for any $k$.
Notice that since $h^{p,q}_{BC}(X)=h^{q,p}_{BC}(X)=h^{n-q,n-p}_{A}(X)=h^{n-p,n-q}_{A}(X)$ by \cite[\S2.c]{schweitzer},
we have $\Delta^{2n-k}(X)=\Delta^k(X)$ for any $k$.

In this note, the invariant
\begin{equation}\label{delta1}
\frac12\, \Delta^1(X) \;=\; h^{0,1}_{BC}(X)+h^{0,1}_{A}(X)-b_1 \;\in\; \N \;
\end{equation}
will play a central role.

\section{\texorpdfstring{$(n-1,n)$}{(n-1,n)}-th strong \texorpdfstring{$\del\delbar$}{partial-overline-partial}-Lemma}

\noindent We study the following class of compact complex manifolds.

\begin{defi}\label{def:(n-1,n)-strong-deldelbar-lemma}
Let $X$ be a compact complex manifold of complex dimension~$n$. We say that $X$ satisfies the {\em $(n-1,n)$-th strong $\del\delbar$-Lemma} if the natural map $\iota_{BC,\,A}^{n-1,\,n}$ is injective.
\end{defi}

Equivalently, for each $\del$-closed form $\Gamma$ of type $(n-1,n)$ on $X$, if $\Gamma=\del \eta + \delbar \nu$, then there exists a $(n-2,n-1)$-form $\gamma$ such that $\Gamma=\del\delbar \gamma$.

\medskip

Clearly, any compact complex manifold satisfying the $\del\delbar$-Lemma also satisfies the $(n-1,n)$-th strong $\del\delbar$-Lemma, and it is also clear that the latter condition implies the $(n-1,n)$-th weak $\del\delbar$-Lemma.
In the following result we show that the converses to these implications do not hold.

\begin{prop}\label{relaciones}
There exist:
\begin{enumerate}
 \item compact complex manifolds $X$ of complex dimension $n$ satisfying the $(n-1,n)$-th strong $\del\delbar$-Lemma
that do not satisfy the $\del\delbar$-Lemma;
 \item compact complex manifolds $X$ of complex dimension $n$ satisfying the $(n-1,n)$-th weak $\del\delbar$-Lemma
but not satisfying the $(n-1,n)$-th strong $\del\delbar$-Lemma.
\end{enumerate}
\end{prop}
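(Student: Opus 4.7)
The plan is to exhibit explicit examples, most naturally on six-dimensional nilmanifolds (that is, $n=3$) equipped with invariant complex structures. In that setting, by Nomizu's theorem and its Bott--Chern and Aeppli analogues, all the cohomological invariants we need—Dolbeault, Bott--Chern, Aeppli numbers, and the $\del\delbar$-degree $\Delta^1$—can be read off from the finite-dimensional complex of left-invariant forms, so every condition to be checked reduces to linear algebra.

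For part (i), I would invoke the characterization given by Theorem~\ref{main-equiv}: the $(n-1,n)$-th strong $\del\delbar$-Lemma is equivalent to $X$ being sGG together with $\Delta^1(X)=0$. It therefore suffices to produce a manifold satisfying both of these conditions while some higher $\Delta^k(X)$, for $k\ge 2$, is strictly positive, so that the full $\del\delbar$-Lemma fails. The Iwasawa manifold with its standard holomorphically parallelizable complex structure is the canonical candidate: a direct count of invariant forms yields $b_1=4$ and $h^{0,1}_{\delbar}=2$, so by the criterion $b_1=2h^{0,1}_{\delbar}$ in Theorem~\ref{car-sGG} the manifold is sGG. A standard computation of the Bott--Chern and Aeppli numbers in bidegree $(0,1)$ (carried out on invariant forms) then gives $h^{0,1}_{BC}+h^{0,1}_{A}=b_1$, so by \eqref{delta1} the invariant $\Delta^1$ vanishes. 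On the other hand, it is well known that the Iwasawa manifold does not satisfy the $\del\delbar$-Lemma, which in the language of \cite{angella-tomassini-3} means $\Delta^k>0$ for some $k\ge 2$. Hence this Iwasawa manifold is an instance of~(i).

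For part (ii), by Theorem~\ref{main-equiv}, failure of the strong $(n-1,n)$-Lemma amounts either to failure of the sGG property or to strict positivity of $\Delta^1$. I would search the classification of invariant complex structures on six-dimensional nilmanifolds provided in \cite{popovici-ugarte} and \cite{ugarte-villacampa} for a structure that is not sGG (so $b_1>2h^{0,1}_{\delbar}$) or has $\Delta^1>0$, and then check the weak $(n-1,n)$-Lemma directly on invariant forms. Concretely, one enumerates the finitely many invariant real $(n-1,n-1)$-forms $\alpha$ whose $\delbar$-image lies in $\imm\del$ on the invariant subcomplex, and verifies that each such $\delbar\alpha$ belongs to $\imm(\del\delbar)$; this is a finite-dimensional linear algebra check, and by Nomizu-type arguments what holds on the invariant complex suffices globally.

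The main obstacle is part (ii): unlike (i), where essentially any non-$\del\delbar$ sGG nilmanifold with $\Delta^1=0$ works, here one must locate a complex structure at the delicate boundary where the algebraic content of the weak lemma survives but the strong one does not. The reality requirement on $\alpha$ and the restriction to $\del$-exactness (as opposed to exactness in the larger space $\imm\del+\imm\delbar$) give the weak lemma a substantially narrower scope; the plan is to exploit precisely this gap, aided by the existing classifications of (strongly) Gauduchon structures on six-dimensional nilmanifolds, to single out a suitable complex structure and verify the two conditions on invariant forms.
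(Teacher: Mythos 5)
Your example for part (i) is wrong. The Iwasawa manifold with its holomorphically parallelizable structure is indeed sGG ($b_1=4=2\,h^{0,1}_{\delbar}$), but the bidegree-$(0,1)$ computation you invoke does not come out as you claim: one finds $h^{0,1}_{BC}=2$ and $h^{0,1}_{A}=3$, so $\tfrac12\Delta^1 = 2+3-4 = 1$, i.e.\ $\Delta^1=2\neq 0$ (this is stated explicitly in Remark~\ref{remark1}). Hence by Theorem~\ref{main-equiv} the Iwasawa manifold does \emph{not} satisfy the $(2,3)$-th strong $\del\delbar$-Lemma. There is also a structural sanity check that should have flagged this: the Iwasawa manifold carries a balanced metric, so if it satisfied the strong Lemma, Theorem~\ref{conseq} would make the balanced condition stable under its small deformations, contradicting the Alessandrini--Bassanelli counter-example \cite[Proposition 4.1]{AB1} that motivates the whole paper. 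More generally, confining the search for (i) to $6$-dimensional nilmanifolds is risky: the paper's actual example is a completely-solvable \emph{solvmanifold} (the Nakamura manifold with a suitable lattice), for which $b_1=2$ and $h^{0,1}_{BC}=h^{0,1}_{\delbar}=h^{0,1}_{A}=1$, so that Corollary~\ref{numerical-char} applies, while $\Delta^2=4\neq 0$ rules out the full $\del\delbar$-Lemma.

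For part (ii) you correctly identify the difficulty but do not resolve it: what you give is a search strategy over classifications, not an example, and the verification of the weak Lemma ``directly on invariant forms'' would additionally require justifying the reduction to the invariant subcomplex for a statement that is not purely a count of cohomology dimensions. The paper closes this gap with a clean structural argument: from the factorization $\iota^{n-1,n}_{BC,A}=\iota^{n-1,n}_{\delbar,A}\circ\iota^{n-1,n}_{BC,\delbar}$, the strong Lemma forces $\iota^{n-1,n}_{BC,\delbar}$ to be injective; but for any nilmanifold with an Abelian complex structure this map is never injective \cite[Proposition 2.9]{angella-ugarte-1}, while the $(n-1,n)$-th weak $\del\delbar$-Lemma always holds there \cite[Corollary 3.5]{ugarte-villacampa}. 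You would need either to import these two results or to supply an explicit structure with a full verification; as it stands, neither half of the proposal constitutes a proof.
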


\begin{proof}
As an example in case (i), let $X$ be the compact complex manifold of complex dimension $3$ 
given by the completely-solvable Nakamura manifold
with the lattice in case {\itshape (ii)} in \cite[Example 2.17]{angella-kasuya-1} 
(see Example~\ref{example1} below for details).
Then, $b_1 =2$ and $h^{0,1}_{\delbar}(X) = h^{0,1}_{BC}(X) = h^{0,1}_{A}(X)=1$,
so Corollary~\ref{numerical-char} below implies that $X$ satisfies the $(2,3)$-th strong $\del\delbar$-Lemma.
However, $\Delta^2(X)=4 \not=0$ and $X$ does not satisfy the $\del\delbar$-Lemma \cite{angella-tomassini-3}.

For the proof of (ii), we first observe that, by the commutative diagram of natural maps
\begin{equation}\label{diagg}
\xymatrix{
  H^{n-1,n}_{BC}(X) \ar[r]^{\iota^{n-1,\,n}_{BC,\,\delbar}} \ar@/_15pt/[rr]_{\iota^{n-1,\,n}_{BC,\,A}} & H^{n-1,n}_{\delbar}(X) \ar[r]^{\iota^{n-1,\,n}_{\delbar,\,A}} & H^{n-1,n}_{A}(X) \, ,
 }
\end{equation}
if a compact complex manifold $X$ satisfies the $(n-1,n)$-th strong $\del\delbar$-Lemma
then the map $\iota^{n-1,\,n}_{BC,\,\delbar}$ is injective.

Let $X$ be a nilmanifold endowed with an Abelian complex structure.
By \cite[Corollary 3.5]{ugarte-villacampa}, $X$ always satisfies the $(n-1,n)$-th weak $\del\delbar$-Lemma;
on the other hand, in \cite[Proposition 2.9]{angella-ugarte-1} it is proved that the map $\iota^{n-1,\,n}_{BC,\,\delbar}$ is never injective.
Hence, $X$ does not satisfy the $(n-1,n)$-th strong $\del\delbar$-Lemma.
\end{proof}

\section{A characterization of the \texorpdfstring{$(n-1,n)$}{(n-1,n)}-th strong \texorpdfstring{$\del\delbar$}{partial-overline-partial}-Lemma}

\noindent In this section, we provide a characterization of the $(n-1,n)$-th strong $\del\delbar$-Lemma in terms of the sGG property and the vanishing of the complex invariant~$\Delta^1$.
This will allow us to ensure the openness of $(n-1,n)$-th strong $\del\delbar$-Lemma under small deformations of the complex structure in the next section.

\begin{thm}\label{main-equiv}
Let $X$ be a compact complex manifold of dimension $n$. Then, $X$ satisfies the $(n-1,n)$-th strong $\del\delbar$-Lemma if and only if $X$ is sGG and $\Delta^1(X)=0$.
\end{thm}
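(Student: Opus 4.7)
The plan is to transport every cohomological statement to bidegree $(1,0)$ via Schweitzer and Serre dualities, where the algebra is transparent, and then use dimension matching together with the tautological injectivity of two natural maps.

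I first set up the dimensional bookkeeping. Schweitzer duality gives $h^{n-1,n}_{BC}=h^{0,1}_{A}$ and $h^{n-1,n}_{A}=h^{0,1}_{BC}$, so combined with the conjugation symmetries $h^{p,q}_{BC}=h^{q,p}_{BC}$ and $h^{p,q}_{A}=h^{q,p}_{A}$ we recover $\tfrac12\Delta^{1}(X)=h^{0,1}_{BC}+h^{0,1}_{A}-b_{1}$ as in~\eqref{delta1}. Moreover, the map $\iota^{0,1}_{\delbar,A}$ is always injective, since a class in its kernel would be represented by a $\delbar$-exact $(0,1)$-form, which is already zero in $H^{0,1}_{\delbar}$; hence $h^{0,1}_{\delbar}\le h^{0,1}_{A}$ holds unconditionally.

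For the direction $\Rightarrow$, the Schweitzer--Serre pairing identifies $\iota^{n-1,n}_{BC,A}$ with the transpose of $\iota^{1,0}_{BC,A}\colon H^{1,0}_{BC}\to H^{1,0}_{A}$, so injectivity of the former is equivalent to surjectivity of the latter. Applied to an arbitrary holomorphic $(1,0)$-form $\alpha$, this surjectivity produces a $d$-closed $\gamma$ with $\gamma-\alpha=\del f$, forcing $\del\alpha=\del\gamma=0$ and so $\alpha$ itself to be $d$-closed. Thus every holomorphic $(1,0)$-form is $d$-closed, a property equivalent, via Schweitzer--Serre dualization of characterization~(iii) in Theorem~\ref{car-sGG}, to sGG. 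Combining the dimensional consequence $h^{0,1}_{A}\le h^{0,1}_{BC}$ of the original injectivity with the chain $h^{0,1}_{BC}=h^{0,1}_{\delbar}\le h^{0,1}_{A}$ coming from sGG and the tautological injection, everything collapses to $h^{0,1}_{A}=h^{0,1}_{BC}$, giving $\Delta^{1}(X)=0$.

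For the direction $\Leftarrow$, assume sGG and $\Delta^{1}(X)=0$. Given a $\del$-closed $\Gamma$ of type $(n-1,n)$ with $\Gamma=\del\beta+\delbar\gamma$, applying $\del$ yields $\del\delbar\gamma=0$, so $\delbar\gamma$ is a $d$-closed, $\delbar$-exact $(n-1,n)$-form; the conjugate of characterization~(v) in Theorem~\ref{car-sGG} then produces $\delbar\gamma=\del\xi$, whence $\Gamma=\del\mu$ for $\mu:=\beta+\xi$ of type $(n-2,n)$ with $\delbar\mu=0$. It remains to show that the natural connecting map
\[
\del\colon H^{n-2,n}_{\delbar}(X)\longrightarrow H^{n-1,n}_{BC}(X),\qquad [\mu]_{\delbar}\longmapsto[\del\mu]_{BC},
\]
vanishes; then $\del\mu=\del\delbar\eta$ and $\Gamma$ is $\del\delbar$-exact. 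By Schweitzer--Serre duality (an integration-by-parts check) this connecting map is the transpose of $D\colon H^{1,0}_{A}\to H^{2,0}_{\delbar}$, $[\beta]_{A}\mapsto\del\beta$, whose kernel coincides with the image of the always injective map $\iota^{1,0}_{\del,A}\colon H^{1,0}_{\del}\to H^{1,0}_{A}$. Under our hypotheses both of its sides have dimension $b_{1}/2$: sGG and Dolbeault conjugation give $h^{1,0}_{\del}=h^{0,1}_{\delbar}=b_{1}/2$, while sGG together with $\Delta^{1}=0$ gives $h^{1,0}_{A}=h^{0,1}_{BC}=b_{1}/2$; so $\iota^{1,0}_{\del,A}$ is bijective, $D=0$, and the connecting map vanishes. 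The main obstacle is precisely the Schweitzer--Serre identification of the connecting map with the transpose of $D$; once this dualization is in place, the dimension match forced by the two hypotheses delivers the result almost automatically.
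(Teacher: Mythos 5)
Your backward implication is correct, and it is essentially the Schweitzer--Serre transpose of the paper's Step~3: the paper factors $\iota^{n-1,\,n}_{BC,\,A}$ through $H^{n-1,n}_{\del}(X)$, observes that $\iota^{n-1,\,n}_{BC,\,\del}$ is surjective for bidegree reasons (the dual of your observation that $\iota^{1,0}_{\del,\,A}$ is injective), and concludes by the same dimension count $h^{1,0}_{\del}=h^{1,0}_{A}=b_1/2$ that you carry out on the dual side; your form-level reduction via the conjugate of characterization ({\it v}) of Theorem~\ref{car-sGG} and the vanishing of the connecting map is a valid variant.

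The forward implication, however, contains a genuine error at the step ``every holomorphic $(1,0)$-form is $d$-closed, a property equivalent \dots to sGG''. That equivalence is false, and the mistake is in the dualization: Serre duality pairs $H^{n,n-1}_{\delbar}(X)$ with $H^{0,1}_{\delbar}(X)$, not with $H^{1,0}_{\delbar}(X)$, so the dual--conjugate of characterization ({\it iii}) is the surjectivity of $\iota^{1,0}_{BC,\,\del}\colon H^{1,0}_{BC}(X)\to H^{1,0}_{\del}(X)$ --- a statement about \emph{$\del$-closed} $(1,0)$-forms modulo $\del$-exact ones, not about holomorphic (i.e.\ $\delbar$-closed) ones. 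Concretely, the holomorphically parallelizable Iwasawa manifold is sGG (Remark~\ref{remark1}) yet carries a holomorphic $1$-form $\varphi^3$ with $d\varphi^3=-\varphi^1\wedge\varphi^2\neq0$; conversely, on the Iwasawa manifold with an Abelian complex structure every holomorphic $1$-form is closed (the space of holomorphic $1$-forms is $\C\langle\eta^1,\eta^2\rangle$ in the notation of Example~\ref{Iwasawa-with-abelian-J}), but this manifold is not sGG since $b_1=4\neq 2\,h^{0,1}_{\delbar}=6$, consistently with Proposition~\ref{relaciones}~({\it ii}). So your argument does not establish sGG, and the subsequent derivation of $\Delta^1(X)=0$, which invokes ({\it vi})--({\it vii}), collapses with it. The repair is one line: apply the surjectivity of $\iota^{1,0}_{BC,\,A}$ to an arbitrary \emph{$\del$-closed} $(1,0)$-form $\alpha$ (which satisfies $\del\delbar\alpha=-\delbar\del\alpha=0$ and hence defines an Aeppli class) to get $\alpha=\gamma+\del f$ with $\gamma$ $d$-closed; this says precisely that $\iota^{1,0}_{BC,\,\del}$ is surjective, which is the correct dual of ({\it iii}). (The paper avoids the issue by arguing directly in bidegree $(n-1,n)$: it applies injectivity to $\delbar\omega^{n-1}$ for a Gauduchon metric $\omega$ and invokes characterization ({\it ii}).)
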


\begin{proof}
We divide the proof into the following steps.

\smallskip
\noindent \textbf{Step 1.} {\itshape If $X$ satisfies the
 $(n-1,n)$-th strong $\del\delbar$-Lemma,
 then $X$ is sGG.}\\
\noindent
Let $\omega$ be any Gauduchon metric on $X$. Then, $\Omega=\delbar \omega^{n-1}$ yields a class in $H^{n-1,n}_{BC}(X)$ such that $\iota_{BC,\,A}^{n-1,\,n}([\Omega]_{BC})=0$ in $H^{n-1,n}_{A}(X)$. The injectivity of $\iota_{BC,\,A}^{n-1,\,n}$
implies the existence of a form $\alpha$ such that $\Omega=\del\delbar \alpha$.
Taking $\gamma:=\delbar\alpha$ we have that $\delbar \omega^{n-1}=\Omega=\del \gamma$. Therefore, $\del\omega^{n-1}=\overline{\delbar\omega^{n-1}}=\delbar\overline\gamma$, so the metric
$\omega$ is strongly Gauduchon. Now, by Theorem~\ref{car-sGG}, the manifold $X$ is sGG.

\smallskip
\noindent \textbf{Step 2.} {\itshape If $X$ satisfies the
 $(n-1,n)$-th strong $\del\delbar$-Lemma,
 then $\Delta^1(X)=0$.}\\
\noindent The injectivity of the map $\iota_{BC,\,A}^{n-1,\,n}$ implies
$h^{1,0}_{A}(X) \;=\; h^{n-1,n}_{BC}(X) \;\leq\; h^{n-1,n}_{A}(X) \;=\; h^{1,0}_{BC}(X)$,
where we have used the duality between Aeppli cohomology and Bott-Chern cohomology \cite{schweitzer}.
Then by \eqref{delta1} we get
 \begin{eqnarray*}
 0 \;\leq\; \frac12\,\Delta^1(X) &=& h^{1,0}_{BC}(X) + h^{1,0}_{A}(X) - b_1 \\[5pt]
 &\leq& 2\, h^{1,0}_{BC}(X) - b_1 \\[5pt]
 &=& 2\, h^{0,1}_{BC}(X) - b_1 \\[5pt]
 &=& 2\, h^{0,1}_{\delbar}(X) - b_1 \;=\; 0 \;,
 \end{eqnarray*}
where we have used {\it Step 1}, and ({\it vi}) and~({\it vii}) in Theorem~\ref{car-sGG}.
Hence, $\Delta^1(X)=0$.

\smallskip
\noindent \textbf{Step 3.} {\itshape If $X$ is sGG and $\Delta^1(X)=0$, then $X$ satisfies the
 $(n-1,n)$-th strong $\del\delbar$-Lemma.
 }\\
 \noindent
Consider the commutative diagram of natural maps
 $$ \xymatrix{
  H^{n-1,n}_{BC}(X) \ar[r]^{\iota^{n-1,\,n}_{BC,\,\del}} \ar@/_15pt/[rr]_{\iota^{n-1,\,n}_{BC,\,A}} & H^{n-1,n}_{\del}(X) \ar[r]^{\iota^{n-1,\,n}_{\del,\,A}} & H^{n-1,n}_{A}(X) \;.
 } $$

By the assumption that $X$ is sGG we have that the natural map $\iota^{n,\,n-1}_{\delbar,\,A}$ is injective by Theorem~\ref{car-sGG}~({\it iii}), whence the natural map $\iota^{n-1,\,n}_{\del,\,A}$ is injective.
We also have that
$$ h^{n-1,n}_{A}(X) \;=\; h^{1,0}_{BC}(X) \;=\; h^{1,0}_{\del}(X) \;=\; h^{n-1,n}_{\del}(X) \;, $$
where we have used the duality between Aeppli cohomology and Bott-Chern cohomology \cite{schweitzer},
Theorem~\ref{car-sGG}~({\it vi}), and the Serre duality, respectively.
It follows that $\iota^{n-1,\,n}_{\del,\,A}$ is in fact an isomorphism.

By the hypothesis $\Delta^1(X)=0$ we have
 \begin{eqnarray*}
  h^{n-1,n}_{BC}(X) &=& b_1(X) - h^{1,0}_{BC}(X) \,=\, 2\, h^{0,1}_{\delbar}(X) - h^{0,1}_{BC}(X) \\[5pt]
  &=& h^{0,1}_{BC}(X) \,=\,  h^{n-1,n}_{A}(X) ,
 \end{eqnarray*}
where we have used \eqref{delta1}, Theorem~\ref{car-sGG}~({\it vii}), Theorem~\ref{car-sGG}~({\it vi}),
and the duality between Aeppli cohomology and Bott-Chern cohomology \cite{schweitzer}, respectively.

Notice that the natural map $\iota_{BC,\,\del}^{n-1,\,n}$ is always surjective for bidegree reasons.
Since $\iota^{n-1,\,n}_{\del,\,A}$ is an isomorphism, then $\iota^{n-1,\,n}_{BC,\,A}$
is surjective and, by the above equality $h^{n-1,n}_{BC}(X)=h^{n-1,n}_{A}(X)$, it is in fact an isomorphism.
Therefore, $X$ satisfies the $(n-1,n)$-th strong $\del\delbar$-Lemma.
\end{proof}

\begin{rmk}\label{remark1}
The properties sGG and $\Delta^1(X)=0$ of compact complex manifolds $X$ are unrelated.
For instance, the Iwasawa manifold is sGG but $\Delta^1= 2\not=0$.
On the other hand, the product of the $5$-dimensional generalized Heisenberg nilmanifold  by $S^1$ has invariant complex structures satisfying $\Delta^1=0$, but they are never sGG manifolds.
(For general results on complex nilmanifolds satisfying the sGG property see \cite{popovici-ugarte}, and for a general study of the invariants $\Delta^k$ see \cite{AFR,LUV}.)
\end{rmk}

\medskip

By Theorem \ref{main-equiv} and by noting that the natural map $\iota^{0,\,1}_{\delbar,\,A}\colon H^{0,1}_{\delbar}(X)\to H^{0,1}_{A}(X)$ induced by the identity is always injective, we get the following numerical characterization of  the $(n-1,n)$-th strong $\del\delbar$-Lemma in terms of Betti and Aeppli numbers.

\begin{cor}\label{numerical-char}
On any compact complex manifold $X$ of complex dimension $n$,
we have
$$ h^{0,1}_{BC}(X) \;\leq\; h^{0,1}_{\delbar}(X) \;\leq\; h^{0,1}_{A}(X) \quad \text{ and } \quad b_1 \;\leq\; 2\,h^{0,1}_{\delbar}(X) \;.$$
Moreover, $X$ satisfies the $(n-1,n)$-th strong $\del\delbar$-Lemma if and only if
$h^{0,1}_{BC}(X) = h^{0,1}_{A}(X)$, if and only if
$$
b_1 = 2\,h^{0,1}_{A}(X).
$$
\end{cor}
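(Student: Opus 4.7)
The plan is to derive the chain of inequalities from two basic injectivity facts, and then to extract the numerical equivalences by a ``squeeze'' between them and Theorems \ref{main-equiv} and \ref{car-sGG}.

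For the inequalities, the middle one $h^{0,1}_{\delbar}(X) \leq h^{0,1}_A(X)$ is the injectivity of $\iota^{0,1}_{\delbar,A}$ already noted in the statement, which holds since there are no $\del$-exact $(0,1)$-forms for bidegree reasons. For $h^{0,1}_{BC}(X) \leq h^{0,1}_{\delbar}(X)$, I would show that $\iota^{0,1}_{BC,\delbar}$ is injective: a class in its kernel admits a representative $\alpha = \delbar f$ that is also $\del$-closed, so $\del\delbar f = 0$, and since pluriharmonic functions on a compact connected complex manifold are constant (a standard integration by parts against a Gauduchon form), one gets $\alpha = \delbar f = 0$. Finally, $b_1 \leq 2 h^{0,1}_{\delbar}(X)$ follows by combining the standard bound $b_1 \leq h^{1,0}_{BC}(X) + h^{0,1}_{\delbar}(X)$ (read off the long exact sequence of $0 \to \C \to \mathcal{O} \to \Omega^1_{cl} \to 0$, as used in \cite{popovici-ugarte} to prove Theorem~\ref{car-sGG}) with the estimate $h^{1,0}_{BC}(X) = h^{0,1}_{BC}(X) \leq h^{0,1}_{\delbar}(X)$ obtained from the previous step via complex conjugation.

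For the characterization, the key is Theorem~\ref{main-equiv}: the $(n-1,n)$-th strong $\del\delbar$-Lemma is equivalent to $X$ being sGG together with $\Delta^1(X) = 0$. Suppose first that $h^{0,1}_{BC}(X) = h^{0,1}_A(X)$; then the chain $h^{0,1}_{BC} \leq h^{0,1}_{\delbar} \leq h^{0,1}_A$ collapses, forcing $h^{0,1}_{BC} = h^{0,1}_{\delbar}$, so $X$ is sGG by Theorem~\ref{car-sGG}(vi); item (vii) then gives $b_1 = 2 h^{0,1}_{\delbar} = h^{0,1}_{BC} + h^{0,1}_A$, so $\frac12\,\Delta^1(X) = h^{0,1}_{BC} + h^{0,1}_A - b_1 = 0$, and the strong Lemma follows. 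Conversely, the strong Lemma supplies sGG (hence $h^{0,1}_{BC} = h^{0,1}_{\delbar}$ and $b_1 = 2 h^{0,1}_{\delbar}$) together with $\Delta^1 = 0$, which together force $h^{0,1}_A = h^{0,1}_{\delbar}$, and hence $h^{0,1}_{BC} = h^{0,1}_A$. The equivalence with $b_1 = 2 h^{0,1}_A$ is a symmetric squeeze: this equality combined with $b_1 \leq 2 h^{0,1}_{\delbar} \leq 2 h^{0,1}_A$ forces $b_1 = 2 h^{0,1}_{\delbar}$ (so sGG) and $h^{0,1}_{\delbar} = h^{0,1}_A$, which with $h^{0,1}_{BC} \leq h^{0,1}_{\delbar}$ recovers the strong Lemma exactly as above.

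The main conceptual content is already packaged in Theorems~\ref{main-equiv} and \ref{car-sGG}; the only new auxiliary fact needed is the injectivity of $\iota^{0,1}_{BC,\delbar}$, which rests only on the rigidity of pluriharmonic functions on compact connected complex manifolds, so I do not expect any serious obstacle beyond careful bookkeeping of the equalities in the squeeze.
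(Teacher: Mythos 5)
Your proposal is correct and follows essentially the same route as the paper, which derives the corollary from Theorem~\ref{main-equiv} together with the numerical characterizations (\emph{vi}) and (\emph{vii}) of the sGG property in Theorem~\ref{car-sGG}, the expression \eqref{delta1} for $\tfrac12\Delta^1$, and the injectivity of the natural maps in bidegree $(0,1)$. The only difference is that you supply self-contained arguments (rigidity of pluriharmonic functions, the exact sequence bounding $b_1$) for the general inequalities $h^{0,1}_{BC}\leq h^{0,1}_{\delbar}$ and $b_1\leq 2h^{0,1}_{\delbar}$, which the paper implicitly quotes from \cite{popovici-ugarte}; these arguments are correct.
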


\section{Stability of the balanced condition}

\noindent In this section, we prove two results concerning stability of the balanced condition. In particular, they possibly allow to construct new examples of balanced manifolds.

\subsection{Balanced metrics and variation of Bott-Chern cohomology}

The first result is mainly intended to notice that the property of stability of balanced metric is, in some sense, closely related to the variation of Bott-Chern cohomology. The following proposition is a straightforward extension of the stability result \cite[Theorem 15]{kodaira-spencer} by K. Kodaira and D.~C. Spencer for K\"ahler metrics. The argument is substantially the same as in \cite[Theorem 5.13]{wu}, where the statement is proven with the stronger hypothesis of the $\partial\overline\partial$-Lemma. See also \cite[Theorem 8.11]{cavalcanti-skt}, where a similar argument is used to study stability for SKT metrics.

\begin{prop}\label{prop:stability-under-const-dim-bc}
 Let $X$ be a compact complex manifold, and let $\{X_t\}_{t\in(-\varepsilon,\varepsilon)}$ be a differentiable family of deformations of $X_0=X$, where $\epsilon>0$. If $X$ admits a balanced metric and the upper-semi-continuous function $t\mapsto \dim_\C H^{n-1,n-1}_{BC}(X_t)$ is constant, then $X_t$ admits a balanced metric for any $t$ close enough to $0$.
\end{prop}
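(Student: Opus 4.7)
The plan is to follow Kodaira--Spencer's stability argument for K\"ahler metrics, substituting the Bott--Chern Laplacian on $(n-1,n-1)$-forms for the Hodge Laplacian on $(1,1)$-forms, and concluding via Michelsohn's characterization of balanced metrics \cite{michelsohn} as those whose fundamental form has $d$-closed $(n-1)$-st power. The given balanced metric $\omega_0$ on $X_0$ produces $\Omega_0:=\omega_0^{n-1}$, a real strictly positive $d$-closed $(n-1,n-1)$-form with respect to $J_0$, defining a class in $H^{n-1,n-1}_{BC}(X_0)$.

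Working on the fixed underlying smooth manifold $M$ carrying the family $\{J_t\}$ of complex structures, let $\tilde\Box_{BC,t}$ denote the fourth-order self-adjoint elliptic Bott--Chern Laplacian \cite{schweitzer} acting on $(n-1,n-1)$-forms with respect to $J_t$, whose kernel computes $H^{n-1,n-1}_{BC}(X_t)$. Upper-semi-continuity of $t\mapsto\dim_\C\ker\tilde\Box_{BC,t}$ is automatic for families of elliptic operators on a compact manifold; combined with the assumed constancy of this dimension, the standard Kodaira--Spencer argument yields that the orthogonal projector $H_t$ onto $\ker\tilde\Box_{BC,t}$ and the associated Green operator depend smoothly on $t$.

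With this in hand, I construct a smooth family $\Omega_t$ of $d$-closed real $(n-1,n-1)$-forms for $J_t$ with $\Omega_t|_{t=0}=\Omega_0$ as follows. Decompose $\Omega_0 = H_0(\Omega_0) + \del\delbar\eta_0$ via the Bott--Chern Hodge decomposition for the $d$-closed form $\Omega_0$, with $\eta_0$ of $J_0$-bidegree $(n-2,n-2)$; smoothly propagate the harmonic part into a section $\tilde H_t\in\ker\tilde\Box_{BC,t}$ with $\tilde H_0=H_0(\Omega_0)$ using the smooth bundle structure of the kernels; smoothly propagate $\eta_0$ to a family $\eta_t$ of $J_t$-$(n-2,n-2)$-forms; and put $\Omega_t:=\tilde H_t+\del_t\delbar_t\eta_t$. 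This is $d$-closed of bidegree $(n-1,n-1)$ for $J_t$ and equals $\Omega_0$ at $t=0$. Since positivity of an $(n-1,n-1)$-form is an open condition, $\Omega_t$ stays strictly positive on the compact manifold $M$ for $t$ sufficiently close to $0$. Michelsohn's theorem then produces a unique Hermitian metric $\omega_t$ with $\omega_t^{n-1}=\Omega_t$, and $d\omega_t^{n-1}=d\Omega_t=0$ exhibits $\omega_t$ as the desired balanced metric on $X_t$.

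The main obstacle is establishing the smooth $t$-dependence of the Bott--Chern harmonic projector from constancy of the kernel dimension alone; this is precisely the elliptic-theoretic mechanism of Kodaira--Spencer, invoked in Wu's Theorem 5.13 under the stronger $\del\delbar$-Lemma (used there only to enforce the required Hodge number constancy) and in Cavalcanti's SKT argument \cite{cavalcanti-skt}. Once that smooth variation is in place, the remainder---decomposing $\Omega_0$, propagating the pieces, checking openness of positivity, and extracting $\omega_t$ via Michelsohn's root---is routine.
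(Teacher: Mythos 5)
Your proposal is correct and follows essentially the same route as the paper: smooth dependence of the Bott--Chern harmonic projector and Green operator on $t$ (deduced from the constancy of $h^{n-1,n-1}_{BC}$ via Kodaira--Spencer's elliptic theory), a resulting smooth family of $d$-closed positive $(n-1,n-1)$-forms extending $\omega_0^{n-1}$, and Michelsohn's $(n-1)$-st root trick. The paper merely packages your decompose-and-propagate step more canonically, applying the single smooth projector $\left( H_t + \del_t\delbar_t\left(\del_t\delbar_t\right)^{*_t}G_t \right) \circ \pi_{\wedge^{n-1,n-1}X_t}$ onto $\ker\del_t\cap\ker\delbar_t$ directly to the fixed form $\omega_0^{n-1}$, which also sidesteps the minor reality issue in your term $\del_t\delbar_t\eta_t$ (for real $\eta_t$ this term is purely imaginary, so one must either symmetrize or insert a factor of $\im$).
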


\begin{proof}
 Take a family $\{\omega_t\}_t$ of Hermitian metrics on $X_t$.
 For any $t\in(-\varepsilon,\varepsilon)$, consider the Bott-Chern Laplacian $\Delta^{BC}_t$ associated to $\omega_t$ and the corresponding Green operator $G_t$ \cite{schweitzer}. Denote by $\pi_{\wedge^{n-1,n-1}X_t}\colon \wedge^{\bullet}X\otimes\C\to \wedge^{n-1,n-1}X_t$ the projection onto the space of $(n-1,n-1)$-forms on $X_t$, and by $H_t\colon\wedge^{\bullet}X\otimes\C \to \ker\Delta^{BC}_t$ the projection onto the space of harmonic forms with respect to $\Delta^{BC}_t$ (and with respect to the $L^2$-pairing induced by $\omega_t$).

 For any $t\in(-\varepsilon,\varepsilon)$, consider the operator
 $$ \Pi_t \;:=\; \left( H_t + \del_t\delbar_t\left(\del_t\delbar_t\right)^{*_t}G_t \right) \circ \pi_{\wedge^{n-1,n-1}X_t} \colon \wedge^\bullet X\otimes\C \to \ker\del_t\cap\ker\delbar_t \;, $$
 where $*_t$ is the Hodge-$*$-operator with respect to $\omega_t$.
 It gives the projection onto the space of $\del_t$-closed $\delbar_t$-closed $(n-1,n-1)$-forms on $X_t$.
 By elliptic theory, if the function $t\mapsto \dim_\C H^{n-1,n-1}_{BC}(X_t)$ is constant, then the family $\{\Pi_t\}_{t}$ is smooth in $t$, see \cite[Theorem 7]{kodaira-spencer}.

 Now, let $\eta_0$ be a balanced metric on $X_0$. For $t\in(-\varepsilon,\varepsilon)$, set
 $$ \Omega_t \;:=\; \Pi_t \eta_0^{n-1} \;. $$
 The family $\{\Omega_t\}_t$ is smooth in $t$. In particular, since $\Omega_0=\Pi_0\eta_0^{n-1}=\eta_0^{n-1}$ is a positive form, then, for $t$ close enough to $0$, the form $\Omega_t$ is positive, too. By the Michelsohn trick \cite[pages 279--280]{michelsohn}, there exists a Hermitian metric $\eta_t:=\sqrt[n-1]{\Omega_t}$, which is in fact a balanced metric on $X_t$.
\end{proof}

\begin{rmk}
The condition on stability of the dimension of Bott-Chern cohomology in Proposition \ref{prop:stability-under-const-dim-bc} is sufficient but not necessary, see Example \ref{example1} below.
\end{rmk}

The following corollary should be compared with Theorem \ref{conseq}, where the hypothesis uses instead the vanishing of the first $\partial\overline\partial$-degree $\Delta^1(X)$.

\begin{cor}
 Let $X$ be a compact complex manifold admitting balanced metrics and such that $\Delta^2(X)=0$. Then any small deformation admits balanced metrics.
\end{cor}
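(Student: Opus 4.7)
The plan is to deduce the corollary directly from Proposition~\ref{prop:stability-under-const-dim-bc}: it suffices to show that under the hypothesis $\Delta^2(X)=0$, the upper semi-continuous function $t\mapsto\dim_{\C}H^{n-1,n-1}_{BC}(X_t)$ is constant for $t$ close to $0$.

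First I would transport the problem to bidegree $(1,1)$ via the Schweitzer duality recalled in the paper,
\[
 h^{n-1,n-1}_{BC}(X_t)\;=\;h^{1,1}_{A}(X_t),
\]
so that the quantity to be controlled sits in total bidegree $2$, where the hypothesis lives.

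The core of the argument is a squeeze between a few semi-continuity facts. On the one hand, general elliptic theory applied to the Bott-Chern and Aeppli Laplacians (in the spirit of \cite{kodaira-spencer}, and specialized to $\Delta^{BC},\Delta^A$ in \cite{schweitzer}) gives that each individual function $t\mapsto h^{p,q}_{BC}(X_t)$ and $t\mapsto h^{p,q}_{A}(X_t)$ is upper semi-continuous at $t=0$. On the other hand, the non-negativity $\Delta^2(X_t)\geq 0$ proven in \cite{angella-tomassini-3} and the topological invariance of $b_2$ give the lower bound
\[
 \sum_{p+q=2}\bigl(h^{p,q}_{BC}(X_t)+h^{p,q}_{A}(X_t)\bigr)\;\geq\;2\,b_2
\]
for every $t$, while the vanishing $\Delta^2(X_0)=0$ turns this inequality into an equality at $t=0$. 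Upper semi-continuity delivers the opposite inequality for $t$ near $0$, so equality must hold summand by summand: every Bott-Chern and Aeppli number in total bidegree $2$ is forced to be locally constant at $t=0$.

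In particular $h^{1,1}_{A}(X_t)$, and hence $h^{n-1,n-1}_{BC}(X_t)$, is locally constant, so Proposition~\ref{prop:stability-under-const-dim-bc} produces balanced metrics on every $X_t$ close to $X_0=X$. The step requiring the most care is the upper semi-continuity of Bott-Chern and Aeppli dimensions under small deformations; this is the non-trivial input that makes the squeeze work, while the rest is bookkeeping on bidegrees and invocations of results already recorded in the previous sections.
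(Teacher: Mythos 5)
Your proposal is correct and takes essentially the same route as the paper: the same squeeze between upper semi-continuity of the Bott--Chern and Aeppli numbers, non-negativity of $\Delta^2(X_t)$, and local constancy of the Betti number. The only cosmetic difference is that the paper works in total degree $2n-2$ via the identity $\Delta^2=\Delta^{2n-2}$, whereas you stay in total degree $2$ and convert $h^{n-1,n-1}_{BC}$ into $h^{1,1}_{A}$ by Schweitzer duality at the end.
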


\begin{proof}
Let $\left\{X_t\right\}_{|t|<\varepsilon}$ be a differentiable family of small deformations of $X=X_0$, where $\varepsilon>0$. 
The statement follows by noting that
\begin{eqnarray*}
  \Delta^2(X_t) \;=\; \Delta^{2n-2}(X_t) &=& \left( h^{n,n-2}_{BC}(X_t) + h^{n-1,n-1}_{BC}(X_t) + h^{n-2,n}_{BC}(X_t) \right) \\[5pt]
  && + \left( h^{n,n-2}_{A}(X_t) + h^{n-1,n-1}_{A}(X_t) + h^{n-2,n}_{A}(X_t) \right) \\[5pt]
  && - 2\, b_{2n-2}(X_t) \;\geq\; 0 \;,
\end{eqnarray*}
where the functions $t\mapsto h^{p,q}_{BC}(X_t)$ and $t\mapsto h^{p,q}_{A}(X_t)$ are upper-semi-continuous for any $(p,q)$, 
and the function $t\mapsto b_{2n-2}(X_t)$ is locally constant. 
In particular, it follows that $t\mapsto \dim_\C H^{n-1,n-1}_{BC}(X_t)$ is constant.
\end{proof}

In fact, in order to show that $h^{n-1,n-1}_{BC}(X_t)$ is constant near $X_0$, it suffices to have that $\dim_\C H^{n-1,n-1}_{BC}(X)$ has its minimum at $X_0$, varying $X$ among deformations of $X_0$. In particular, this applies to the Iwasawa manifold endowed with any Abelian complex structure, showing a different behaviour with respect to the holomorphically parallelizable complex structure on it in terms of stability of existence of balanced metrics, see \cite[Proposition 4.1]{AB1}.

\begin{prop}\label{prop:iwasawa-ab-def}
Let $X$ be the Iwasawa manifold endowed with an Abelian complex structure. 
Let $\{X_t\}_{t\in(-\varepsilon,\varepsilon)}$ be a differentiable family of deformations of $X_0=X$, where $\epsilon>0$. 
Then $X_t$ admits a balanced metric for any $t$ close enough to $0$.
\end{prop}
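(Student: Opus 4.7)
The strategy is to apply Proposition~\ref{prop:stability-under-const-dim-bc}: it suffices to verify that (i) the central fibre $X_0$ admits a balanced Hermitian metric and (ii) the function $t\mapsto \dim_{\mathbb{C}} H^{2,2}_{BC}(X_t)$ is constant in a small neighbourhood of $0$.

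Ingredient (i) is classical. Taking a left-invariant $(1,0)$-coframe $\{\varphi^{j}\}_{j=1,2,3}$ realising the Abelian complex structure on the Iwasawa nilmanifold, the diagonal invariant Hermitian metric $\omega=\tfrac{i}{2}\sum_{j=1}^{3}\varphi^{j}\wedge\overline{\varphi}^{j}$ satisfies $d\omega^{2}=0$ by a short computation at the level of the underlying six-dimensional nilpotent Lie algebra, exploiting the constraints the Abelian condition places on the bidegrees of $d\varphi^{j}$. Hence $X_0$ carries an invariant balanced metric.

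For ingredient (ii), by the remark preceding the proposition together with the standard upper semi-continuity of the Bott-Chern dimensions, it suffices to show that $h^{2,2}_{BC}(X_0)$ attains the minimum of $h^{2,2}_{BC}$ on the deformation family. The cleanest route is to verify $\Delta^{2}(X_0)=0$ for the Abelian complex structure; then the non-negativity $\Delta^{2}(X_t)\ge 0$ for nearby $t$, combined with the upper semi-continuity of the Bott-Chern and Aeppli dimensions entering the definition of $\Delta^{2}$ and the local constancy of $b_{4}$, forces $h^{2,2}_{BC}(X_t)\ge h^{2,2}_{BC}(X_0)$ near $0$, exactly as in the proof of the preceding corollary. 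Failing that, one would work at the level of invariant complex structures — small deformations of the Abelian nilpotent structure can be represented by invariant complex structures on the same nilmanifold, and the Bott-Chern cohomology of each is computed on the finite-dimensional bigraded complex of invariant forms — and compare $h^{2,2}_{BC}(X_0)$ with $h^{2,2}_{BC}$ for each nearby invariant structure.

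The main obstacle is this last comparison in the case $\Delta^{2}(X_0)\neq 0$: a finite-dimensional but genuinely parameter-dependent linear-algebra computation across the Kuranishi family of the Abelian complex structure, whose output must confirm the minimality claim. Once the minimality is in hand by either route, Proposition~\ref{prop:stability-under-const-dim-bc} applies directly and yields a balanced metric on $X_t$ for all $t$ close enough to $0$.
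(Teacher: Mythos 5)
Your overall strategy is the paper's: reduce to Proposition~\ref{prop:stability-under-const-dim-bc} by showing that $h^{2,2}_{BC}$ attains its minimum over the deformation family at the Abelian structure. But the decisive step is left undone, and your preferred shortcut does not work. The Abelian complex structures on the Iwasawa manifold do \emph{not} satisfy $\Delta^2(X_0)=0$: for instance, at the level of invariant forms all three $(3,1)$-forms $\eta^{123\bar k}$ are $d$-closed while $\partial\overline\partial$ vanishes on invariant $(2,0)$-forms, and feeding $h^{3,1}_{BC}=h^{1,3}_{BC}=3$ together with $b_4=8$ and the Bott-Chern/Aeppli duality into the formula for $\Delta^2=\Delta^4$ already forces $\Delta^2>0$. (This is also why the paper does not simply invoke the corollary on $\Delta^2=0$ for this example.) So you are necessarily thrown onto your fallback route, which is exactly the paper's argument --- and there you explicitly flag the ``parameter-dependent linear-algebra computation'' as an open obstacle rather than resolving it. The paper closes it with two citations: by \cite[Remark 4]{console-fino} and \cite[Theorem 2.6]{rollenske} every sufficiently small deformation of $X$ is again an \emph{invariant} complex structure on the nilmanifold, and by the complete tables in \cite[Table 2]{AFR} and \cite[Appendix 6]{LUV} the value $h^{2,2}_{BC}=6$ attained by the Abelian structures is the minimum over all invariant complex structures on this nilmanifold. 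Without that input (or an equivalent computation over the Kuranishi family) the minimality claim, and hence the constancy of $t\mapsto h^{2,2}_{BC}(X_t)$, is unproved.

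A secondary issue: your ingredient (i) asserts that the diagonal metric $\omega=\tfrac{\im}{2}\sum_j\varphi^{j\bar j}$ is balanced. In the normal form $d\eta^3=\eta^{1\bar1}+\eta^{1\bar2}+D\,\eta^{2\bar2}$ of \cite{COUV} this is false (e.g.\ $\eta^{2\bar2}\wedge d\eta^3=\pm\,\eta^{1\bar12\bar2}\neq0$ contributes to $d\omega^2$); the balanced metrics one actually writes down, as in Example~\ref{Iwasawa-with-abelian-J}, carry off-diagonal terms $\eta^{1\bar2}+\eta^{2\bar1}$ and a rescaled coefficient on $\eta^{2\bar2}$. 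The existence of a balanced metric on the Abelian Iwasawa manifold is of course true, but you should either exhibit a correct one or cite it (\cite[Corollary 2.9]{ugarte-villacampa}), rather than rely on a computation that fails for the coframe you have implicitly fixed.
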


\begin{proof}
Note that $X$ admits a balanced metric by \cite[Corollary 2.9]{ugarte-villacampa}.
By \cite[Remark 4]{console-fino} and \cite[Theorem 2.6]{rollenske}, the complex structure of any sufficiently small deformation of $X$ is invariant.
From \cite[Table 2]{AFR} and \cite[Appendix 6]{LUV}, the dimension of the Bott-Chern cohomology group of bi-degree $(2,2)$, (which varies upper-semi-continuously along differentiable families,) reaches its minimum value $6$ among invariant complex structures 
on the manifold $X_0$. Hence the statement follows from Proposition \ref{prop:stability-under-const-dim-bc}.
\end{proof}

\begin{example}\label{Iwasawa-with-abelian-J}
We provide here a concrete example of deformations valid for any Abelian complex structure $J$ 
on the Iwasawa manifold. 
The invariant Abelian complex structures on the Iwasawa manifold 
are classified in \cite{ABD} up to isomorphism, and by \cite[Section 3]{COUV} 
there exists a $(1,0)$-basis $\{\eta^j\}_{j\in\{1,2,3\}}$ satisfying
$$
d\eta^1 \;=\; d\eta^2 \;=\; 0 \;,
\quad
d\eta^3 \;=\; \eta^{1\bar{1}} + \eta^{1\bar2} + D\,\eta^{2\bar{2}} \;,
$$
for some $D\in [0,1/4)$.
Since the forms $\eta^{\bar{j}}$, for $j\in\{1,2,3\}$, are $\bar{\partial}$-closed,
the Dolbeault cohomology group of bi-degree $(0,1)$ is given by
$H^{0,1}_{\bar{\partial}} = \C\left\langle \left[\bar\eta^{1}\right], \; \left[\bar\eta^{2}\right], \; \left[\bar\eta^{3}\right] \right\rangle$.
We define the following deformation of $J$: for $t \in \Delta=\left\{ t\in\mathbb{C} \;\middle\vert\; |t|^2<1 \right\}$, we consider the complex structure $J_t$ on the Iwasawa manifold given by the following $(1,0)$-forms:
$$
\eta^1_t \;:=\; \eta^1 \;, \quad
\eta^2_t \;:=\; \eta^2 + t\, \eta^{\bar{2}} \;, \quad
\eta^3_t \;:=\; \eta^3 \;.
$$
It is straightforward to see that the differentials satisfy
$$
d\eta^1_t \;=\; d\eta^2_t \;=\; 0 \;,
\quad
d\eta^3_t \;=\; -\frac{\bar{t}}{1-|t|^2}\,\eta^{12}_t + \eta^{1\bar{1}}_t + \frac{1}{1-|t|^2}\,\eta^{1\bar{2}}_t + \frac{D}{1-|t|^2}\,\eta^{2\bar{2}}_t \;.
$$
Therefore, $J_t$ defines an invariant complex structure for any $t \in \Delta$, which is non-Abelian for any $t \not= 0$.

By Proposition \ref{prop:iwasawa-ab-def}, the complex structures $J_t$ admit a balanced metric for any $t$ close enough to $0$.
In fact, for any $t\in\Delta$, the real $2$-form
$$
\omega_t \;:=\; \frac{\im}{2}\,\eta^{1\bar{1}}_t + \frac{\im}{4}\, \frac{1-2D}{1-|t|^2}\,\eta^{2\bar{2}}_t
+ \frac{\im}{2}\,\eta^{3\bar{3}}_t
+ \frac{\im}{4}\,\eta^{1\bar{2}}_t + \frac{\im}{4}\,\eta^{2\bar{1}}_t
$$
is positive, because $0\leq 4D<1$, and it satisfies that $d \omega_t^2=0$. 
So it defines a balanced $J_t$-Hermitian metric on the Iwasawa manifold for any $t \in \Delta$.
\end{example}

\begin{rmk}
Note that the same holds true, more in general, on $6$-dimensional nilmanifolds. More precisely, let $X$ be a $6$-dimensional nilmanifold endowed with an Abelian complex structure admitting balanced metrics. Let $\{X_t\}_{t\in(-\varepsilon,\varepsilon)}$ be a differentiable family of deformations of $X_0=X$, where $\epsilon>0$. Then $X_t$ admits a balanced metric for any $t$ close enough to $0$.
In fact, by \cite[Proposition 2.8]{ugarte-villacampa}, non-tori $6$-dimensional nilmanifolds with Abelian complex structures admitting balanced metrics are $\mathfrak{h_3}$ and $\mathfrak{h}_5$. The case $\mathfrak{h}_5$ being treated in Proposition~\ref{Iwasawa-with-abelian-J}, the conclusion follows as before by noticing that, for invariant complex structures on $\mathfrak{h}_3$, the dimension of $H^{2,2}_{BC}(X)$ is always equal to $7$, as computed in \cite[Table 2]{AFR} and \cite[Appendix 6]{LUV}.
\end{rmk}

\begin{rmk}

Let $X$ be the Iwasawa manifold endowed with an Abelian complex structure, and let $X_t$ be any small deformation of $X$. 
Notice that if the complex structure of $X_t$, $t \not=0$, is not Abelian (as it happens in Example~\ref{Iwasawa-with-abelian-J}),
then by \cite[Proposition 3.6]{ugarte-villacampa} $X_t$ never satisfies the weak $\del\delbar$-Lemma, 
so the existence of balanced metric on $X_t$ guaranteed by Proposition~\ref{prop:iwasawa-ab-def} 
cannot be derived from \cite[Theorem~6]{fu-yau}.
\end{rmk}

\subsection{Balanced metrics and \texorpdfstring{$(n-1,n)$}{(n-1,n-1)}-th strong \texorpdfstring{$\del\delbar$}{partialoverlinepartial}-Lemma }

It follows from the numerical characterization in Corollary \ref{numerical-char} that the $(n-1,n)$-th strong $\del\delbar$-Lemma is an open property under deformations. On the other side, it is not closed under deformations.

\begin{prop}\label{openness}\label{no-closed}
For compact complex manifolds, the property of satisfying the $(n-1,n)$-th strong $\del\delbar$-Lemma
\begin{itemize}
 \item is open,
 \item is not closed,
\end{itemize}
under deformations of the complex structure.
\end{prop}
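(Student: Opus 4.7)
The plan is to treat openness and failure of closedness separately, using the numerical characterization in Corollary \ref{numerical-char} as the main tool. Recall that this characterization says $X$ satisfies the $(n-1,n)$-th strong $\del\delbar$-Lemma if and only if $b_1(X) = 2\,h^{0,1}_A(X)$, and that the inequality $b_1 \leq 2\,h^{0,1}_A$ holds on \emph{every} compact complex manifold.

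For openness, let $\{X_t\}$ be a differentiable family with $X_0$ satisfying the property. The first Betti number $b_1(X_t)$ is locally constant in $t$, being a topological invariant. The Aeppli number $t\mapsto h^{0,1}_A(X_t)$ is upper semi-continuous in $t$ by the elliptic theory of the Aeppli Laplacian (or, by the Schweitzer duality $h^{0,1}_A = h^{n,n-1}_{BC}$, by upper semi-continuity of Bott--Chern numbers). Hence, for $t$ sufficiently close to $0$,
\[
\tfrac{1}{2}\,b_1(X_t) \;\leq\; h^{0,1}_A(X_t) \;\leq\; h^{0,1}_A(X_0) \;=\; \tfrac{1}{2}\,b_1(X_0) \;=\; \tfrac{1}{2}\,b_1(X_t),
\]
so equality persists and $X_t$ satisfies the $(n-1,n)$-th strong $\del\delbar$-Lemma.

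For failure of closedness, the aim is to exhibit a differentiable family $\{X_t\}$ such that $X_t$ satisfies the property on a punctured neighbourhood of $0$ but $X_0$ does not. By the numerical characterization, this amounts to finding a family along which $h^{0,1}_A(X_t)$ jumps strictly upward at the central fibre: upper semi-continuity permits, but does not rule out, such a jump. By Theorem \ref{main-equiv}, such a jump translates into a central fibre that either fails to be sGG or has $\Delta^1 > 0$, while nearby fibres satisfy both sGG and $\Delta^1 = 0$. Natural candidates are families of invariant complex structures on low-dimensional nilmanifolds or solvmanifolds, whose Bott--Chern and Aeppli numbers can be computed on an invariant model in the spirit of Example \ref{Iwasawa-with-abelian-J}, using the classifications and tables in \cite{AFR,LUV}; concretely, a central fibre with an Abelian complex structure (which is never sGG, compare the proof of Proposition \ref{relaciones}(ii)) deforming into a non-Abelian complex structure that becomes sGG and satisfies $\Delta^1 = 0$ would suffice.

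The main obstacle is the non-closedness part: openness is a soft consequence of the semi-continuity package once Corollary \ref{numerical-char} is available, whereas non-closedness requires pinpointing a concrete family and checking the relevant Bott--Chern and Aeppli dimensions both at the central fibre and at a generic nearby fibre. In particular, one must choose a family whose central fibre lies in the region $b_1 < 2\,h^{0,1}_A$ and whose deformed fibres cross over to the extremal locus $b_1 = 2\,h^{0,1}_A$.
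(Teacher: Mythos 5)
Your openness argument is correct and is exactly the paper's: combine the numerical characterization $b_1 = 2\,h^{0,1}_A$ of Corollary~\ref{numerical-char} with the fact that $b_1 \leq 2\,h^{0,1}_A$ always holds and that $t\mapsto h^{0,1}_A(X_t)$ is upper semi-continuous (\cite[Lemme 3.2]{schweitzer}), so the equality is trapped for nearby fibres. No issues there.

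The non-closedness half, however, has a genuine gap: you describe what a counter-example would have to look like, but you never exhibit one, and a proof of ``is not closed'' \emph{is} the exhibition of a concrete family together with the verification of the relevant cohomological data on both the central and the generic fibre. You acknowledge this yourself (``the main obstacle is the non-closedness part''), so what you have is a proof strategy, not a proof. Moreover, your proposed candidate class is doubtful on two counts. First, the parenthetical claim that an Abelian complex structure ``is never sGG'' is not what the proof of Proposition~\ref{relaciones}(ii) establishes: that argument shows $\iota^{n-1,n}_{BC,\delbar}$ is never injective for Abelian nilmanifold structures, i.e.\ the \emph{strong} $\del\delbar$-Lemma fails, which is a different (and weaker, in the relevant direction) statement than failure of sGG. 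Second, and more seriously, you give no reason why the deformed non-Abelian invariant structures on a nilmanifold should land on the extremal locus $b_1 = 2\,h^{0,1}_A$; for the standard $6$-dimensional examples (e.g.\ the Iwasawa manifold, which is sGG but has $\Delta^1=2$) they do not. The paper instead takes the holomorphically parallelizable Nakamura manifold as central fibre, with the deformations of \cite[\S4]{angella-kasuya-2}, case (1): there the fibres $X_t$ for $t\neq 0$ satisfy the full $\del\delbar$-Lemma (hence trivially the $(n-1,n)$-th strong $\del\delbar$-Lemma), while $\Delta^1(X_0)=8\neq 0$ and $X_0$ is not sGG, so the central fibre fails the property by Theorem~\ref{main-equiv}. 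Until you either verify your nilmanifold scenario or substitute a documented example of this kind, the second bullet of the Proposition remains unproved.
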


\begin{proof}
We provide here a proof and a counter-example.

\smallskip
\noindent \textbf{Openness.}
Let $\{X_t\}_{t\in B}$ be any differentiable family of compact complex manifolds and suppose that $X_{t_0}$
satisfies the $(n-1,n)$-th strong $\del\delbar$-Lemma for some $t_0\in B$.
By Corollary~\ref{numerical-char} and by the
upper-semi-continuity of the Aeppli numbers (see \cite[Lemme 3.2]{schweitzer}), we have
$$
b_1 \leq 2\, h^{0,1}_{A}(X_{t})\leq 2\, h^{0,1}_{A}(X_{t_0}) = b_1
$$
for all $t\in B$ close enough to $t_0$.
Therefore,
$b_1 = 2\, h^{0,1}_{A}(X_{t})$ and
$X_{t}$ satisfies the $(n-1,n)$-th strong $\del\delbar$-Lemma for all $t$ sufficiently close to $t_0$.

\smallskip
\noindent \textbf{Non-closedness.}
Consider the deformations $\{X_t\}_{t \in B}$ in case {\itshape (1)} given in \cite[\S4]{angella-kasuya-2}.
Here $B$ is an open ball around $0$ in $\mathbb{C}$ and the central fibre $X_0$ is the (holomorphically parallelizable) Nakamura manifold.
The fibres $X_t$ satisfy the $\del\delbar$-Lemma for $t \not=0$, however $\Delta^1(X_0)= 8\not=0$.
Moreover, the central fibre is not sGG \cite{popovici-ugarte}.
\end{proof}

\medskip

We prove now that the $(n-1,n)$-th strong $\del\delbar$-Lemma yields stability of the balanced condition. 
Note that the following result is stated for locally conformally balanced manifolds and that, 
in general, the existence of locally conformally balanced metrics is much weaker than the existence of balanced metrics: 
for instance, there exist many locally conformally balanced nilmanifolds 
not admitting any balanced metric \cite{medori-tomassini-ugarte}.

\begin{thm}\label{conseq}
Let $X$ be a compact complex manifold of complex dimension $n$ with a locally conformally balanced metric, and let $\{X_{t}\}_t$ be a holomorphic family of deformations of $X=X_0$.
If $X$ satisfies the $(n-1,n)$-th strong $\del\delbar$-Lemma, then $X_{t}$ admits a balanced metric for any $t$ sufficiently close to $0$.
\end{thm}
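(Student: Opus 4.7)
The plan is to reduce the statement to Fu–Yau's stability theorem (Theorem~\ref{fu-yau}) by exploiting two consequences of the $(n-1,n)$-th strong $\del\delbar$-Lemma: it upgrades locally conformally balanced to balanced on the central fibre, and it is open under deformations, hence propagates (in particular, its weak version propagates) to nearby fibres.

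\smallskip

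\textbf{Step 1: Upgrade the metric on $X_0$.}
First I would use the hypothesis that $X$ satisfies the $(n-1,n)$-th strong $\del\delbar$-Lemma to turn the given locally conformally balanced metric into a genuine balanced metric. This is precisely the content of \cite[Theorem 2.5]{angella-ugarte-1}: under the $(n-1,n)$-th strong $\del\delbar$-Lemma, every locally conformally balanced Hermitian structure on $X$ is globally conformal to a balanced one. In particular $X=X_0$ admits a balanced metric, which is the hypothesis required at $t=0$ by Fu and Yau.

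\smallskip

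\textbf{Step 2: Propagate the $(n-1,n)$-th weak $\del\delbar$-Lemma to nearby fibres.}
Next I would appeal to Proposition~\ref{openness}, according to which the $(n-1,n)$-th strong $\del\delbar$-Lemma is an open property in a differentiable family. Since by assumption $X_0$ satisfies it, there is a neighbourhood of $0$ in the parameter space such that $X_t$ also satisfies the $(n-1,n)$-th strong $\del\delbar$-Lemma for every $t$ in that neighbourhood. The strong version trivially implies the weak one (compare diagram~\eqref{diagg}), so in particular $X_t$ satisfies the $(n-1,n)$-th weak $\del\delbar$-Lemma for all $t$ close enough to $0$, including $t\ne 0$.

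\smallskip

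\textbf{Step 3: Conclude via Fu–Yau.}
With Steps 1 and 2 in place, the hypotheses of Theorem~\ref{fu-yau} are met for the family $\{X_t\}$ shrunk to a small enough neighbourhood of $0$: $X_0$ carries a balanced metric, and every nearby $X_t$ with $t\ne 0$ satisfies the $(n-1,n)$-th weak $\del\delbar$-Lemma. Applying that theorem yields a balanced metric on $X_t$ for all $t$ sufficiently close to $0$, which is the desired conclusion.

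\smallskip

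There is no real obstacle here once Theorem~\ref{main-equiv} and Proposition~\ref{openness} are available: the openness of the strong $\del\delbar$-Lemma follows from the upper-semi-continuity of the Aeppli numbers via the numerical characterization $b_1=2h^{0,1}_A$ (Corollary~\ref{numerical-char}), and the passage from locally conformally balanced to balanced is the external input \cite[Theorem~2.5]{angella-ugarte-1}. The only point to be mildly careful about is to fix the neighbourhood of $0$ to be simultaneously contained in the domain of openness of Step~2 and in the domain where Fu–Yau's deformation argument produces positive representatives, but this is routine since both neighbourhoods are open and contain~$0$.
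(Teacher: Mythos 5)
Your proposal is correct and follows essentially the same three-step route as the paper's own proof: apply \cite[Theorem 2.5]{angella-ugarte-1} (via the injectivity of $\iota^{n-1,\,n}_{BC,\,\delbar}$ coming from diagram~\eqref{diagg}) to obtain a balanced metric on $X_0$, invoke the openness of the $(n-1,n)$-th strong $\del\delbar$-Lemma from Proposition~\ref{openness} to get the weak $\del\delbar$-Lemma on nearby fibres, and conclude with Theorem~\ref{fu-yau}. The paper only adds an expository recollection of Fu and Yau's argument, which is not logically required.
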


\begin{proof}
In \cite[Theorem 2.5]{angella-ugarte-1} it is proved that, on a compact complex manifold $X$ of complex dimension $n$, if the natural map $\iota^{n-1,\,n}_{BC,\,\delbar} \colon H^{n-1,n}_{BC}(X) \to H^{n-1,n}_{\delbar}(X)$ induced by the identity is injective, then any locally conformally balanced metric is also globally conformally balanced.
Since $X$ satisfies the $(n-1,n)$-th strong $\del\delbar$-Lemma,
by the commutative diagram \eqref{diagg} the map $\iota^{n-1,\,n}_{BC,\,\delbar}$ is injective 
and we can apply \cite[Theorem 2.5]{angella-ugarte-1}
to ensure the existence of a balanced metric on $X$.

Since the $(n-1,n)$-th strong $\del\delbar$-Lemma property is open by Corollary~\ref{openness}, we have that $X_{t}$ also satisfies the $(n-1,n)$-th strong $\del\delbar$-Lemma for any $t$ sufficiently close to $0$, in particular $X_{t}$ satisfies the $(n-1,n)$-th weak $\del\delbar$-Lemma.
Now, Theorem~\ref{fu-yau} implies the existence of a balanced metric on $X_t$ for any $t$ sufficiently close to $0$.

For the sake of completeness, we recall here the main ideas in the argument by J. Fu and S.-T. Yau. 
By the Ehresmann theorem, we look at $X_t$ as $(X,J_t)$ where $\{J_t\}_t$ is a differentiable family 
of complex structures on the fixed smooth manifold $X$.
Let $\omega_0$ be a balanced metric on $(X,J_0)$.
Denote by $\pi_t^{(p,q)} \colon \wedge^{p+q}X\otimes\C \to \wedge^{p,q}_{J_t}X$ the projection onto the space of $(p,q)$-forms with respect to $J_t$.
Note that, since $\omega_0^{n-1}$ is closed, the form $\pi^{(n-1,n-1)}_t\omega_0^{n-1}$ is a $(n-1,n-1)$-form with respect to $J_t$ such that $\delbar_t\pi^{(n-1,n-1)}_t\omega_0^{n-1}$ is $\del_t$-exact. Then, by the $(n-1,n)$-th weak $\del\delbar$-Lemma, there exists a $(n-2,n-1)$-form $\psi_t$ with respect to $J_t$ such that $\delbar_t\pi^{(n-1,n-1)}_t\omega_0^{n-1}=\im\del_t\delbar_t\psi_t$.
Set
$$ \Omega_t \;:=\; \pi^{(n-1,n-1)}_t\omega_0^{n-1} + \im \del_t\psi_t - \im \delbar_t\bar\psi_t \;. $$
It is a real closed $(n-1,n-1)$-form with respect to $J_t$, and we claim that it is also positive. Indeed, this follows by noting that we can choose $\psi_t$ such that $i\psi_t=(\del_t\delbar_t)^*\gamma_t$ where $\gamma_t$ is a solution of the elliptic problem $\square_{BC, t}\gamma_t=\delbar_t\pi^{(n-1,n-1)}_t\omega_0^{n-1}$, the self-adjoint $4$th order elliptic operator $\square_{BC,t}$ being the Bott-Chern Laplacian \cite{schweitzer} with respect to the metric $\sqrt[n-1]{\pi^{(n-1,n-1)}_t\omega_0^{n-1}}$. Then, by elliptic estimates, it follows that $\Omega_t$ is close to $\omega_0^{n-1}$ for $t$ small enough, and hence positive. Therefore, by the Michelsohn trick,
$$ \omega_t \;:=\; \sqrt[n-1]{\Omega_t} $$
gives a balanced Hermitian structure on $(X,J_t)$.
\end{proof}

\begin{example}\label{example1}
As an example of application of Theorem \ref{conseq}, we consider the completely-solvable Nakamura manifold $X$
with the lattice in case {\itshape (ii)} in \cite[Example 2.17]{angella-kasuya-1}. 
More precisely, let $G:=\C\ltimes _{\phi}\C^{2}$, where
$$ \phi\left(x+\im\,y\right) \;:=\;
\left(
\begin{array}{cc}
\exp(x)& 0  \\
0&    \exp(-x)
\end{array}
\right) \in\mathrm{GL}\left(\C^2\right)\;.
$$
For some $a\in\mathbb{R}$, for any $b\in\mathbb{R}$, for any $\Gamma^{\prime\prime} $ lattice of $\C^{2}$, the subgroup $\Gamma := \left(a\,\mathbb{Z}+b\,\im\,\mathbb{Z}\right)\ltimes_\phi \Gamma^{\prime\prime}$ is a lattice of $G$. In particular, we consider the case $b=(2m+1)\pi$ for some integer $m\in\mathbb{Z}$, which is denoted as case {\itshape (ii)} in \cite[Example 2.17]{angella-kasuya-1}.
Consider the completely-solvable solvmanifold $X:=\left.\Gamma\middle\backslash G\right.$.

The compact complex 3-dimensional manifold $X$ satisfies $b_1 =2$ and $h^{0,1}_{BC}(X) = h^{0,1}_{\delbar}(X) = h^{0,1}_{A}(X) = 1$,
so Corollary~\ref{numerical-char} implies that $X$ satisfies the $(2,3)$-th strong $\del\delbar$-Lemma.
(Notwithstanding, $\Delta^2(X)=4 \not=0$ so $X$ does not satisfy the $\del\delbar$-Lemma \cite{angella-tomassini-3}. Moreover, small deformations $X_t$ of $X$ have $\dim_\C H^{2,2}_{BC}(X_t)\in\{3,7,11\}$, see \cite[Table 6]{angella-kasuya-1}, whence we cannot apply Proposition \ref{prop:stability-under-const-dim-bc} in general.)

Moreover, the compact complex manifold $X$ has a balanced metric. It is given as follows.
Consider holomorphic coordinates $\left\{z_1,\, z_2,\, z_3\right\}$ on $G$, where $\left\{ z_1 \right\}$ is the holomorphic coordinate on the factor $\C$ and $\left\{ z_2, z_3 \right\}$ are holomorphic coordinates on the factor $\C^2$. The form
$$
\omega= \frac{\im}{2} ( dz_1 \wedge d\bar{z}_1
+ e^{-z_1 -\bar{z}_1} dz_2 \wedge d\bar{z}_2 + e^{z_1 +\bar{z}_1} dz_3 \wedge d\bar{z}_3 )
$$
is well defined on the quotient $X$, and it yields a balanced metric on $X$.
Now, by Theorem~\ref{conseq} any small deformation of the non-$\partial \bar\partial$-manifold $X$ also admits a balanced metric.
\end{example}

\end{document}